\def\tbcaption{\def\@captype{table}\caption}
\def\figcaption{\def\@captype{figure}\caption}
\newtheorem{main}{Main Theorem}
\newtheorem{theorem}{Theorem}[section]
\newtheorem{proposition}[theorem]{Proposition}
\newtheorem{corollary}[theorem]{Corollary}
\newtheorem{definition}[theorem]{Definition}
\newtheorem{remark}[theorem]{Remark}
\begin{document}
\title[]
{Weighted Hamiltonian stationary Lagrangian submanifolds and generalized Lagrangian mean curvature flows in toric almost Calabi--Yau manifolds}
\author{Hikaru Yamamoto}
\address{Graduate School of Mathematical Sciences, The University of Tokyo, 3-8-1 Komaba Meguro-ku Tokyo 153-8914, Japan}
\email{yamamoto@ms.u-tokyo.ac.jp}
\begin{abstract} 
In this paper we generalize examples of Hamiltonian stationary Lagrangian submanifolds constructed by Lee and Wang \cite{LeeWang} in $\mathbb{C}^{m}$ to 
toric almost Calabi--Yau manifolds. 
We construct examples of weighted Hamiltonian stationary Lagrangian submanifolds in toric almost Calabi--Yau manifolds 
and solutions of generalized Lagrangian mean curvature flows starting from these examples. 
We allow these flows to have some singularities and topological changes. 
\end{abstract}
\subjclass[2010]{53C42, 53C44}
\thanks{This work was supported by Grant-in-Aid for JSPS Fellows Grant Number 25$\cdot$6407 and the Program for Leading Graduate Schools, MEXT, Japan. }
\maketitle
\section{Introduction}\label{Intro}
Recently study of Lagrangian submanifolds acquire much importance in association with Mirror Symmetry. 
There are several classes of Lagrangian submanifolds. 
For example, special Lagrangian submanifolds are defined in Calabi--Yau manifolds by Harvey and Lawson in \cite{HarveyLawson} 
and they have an important role in the Strominger--Yau--Zaslow conjecture \cite{StromingerYauZaslow}. 
A class of Hamiltonian stationary Lagrangian submanifolds is also defined in Calabi--Yau manifolds, 
especially a special Lagrangian submanifold is a Hamiltonian stationary Lagrangian submanifold. 
In general constructing explicit examples of special or Hamiltonian stationary Lagrangian submanifolds is difficult. 
However some examples are constructed in the case that the ambient Calabi--Yau manifold has symmetries, especially in $\mathbb{C}^{m}$. 

For example, one of examples of special Lagrangian submanifolds in $\mathbb{C}^{m}$ constructed by Harvey and Lawson in \cite[III.3.A]{HarveyLawson} is defined by 
$$M_{c}:=\{\,(z_{1},\dots,z_{m})\in\mathbb{C}^{m}\mid \mathrm{Re}(z_{1}\cdots z_{m})=c_{1},\, |z_{1}|^{2}-|z_{j}|^{2}=c_{j}\, (j=2,\dots,m) \}, $$
where $c=(c_{1},\dots,c_{m})\in\mathbb{R}^{m}$ and note that the phase of $M_{c}$ is $i^{m}$. 
We remark that if we put $z_{j}=x_{j}e^{i\theta_{j}}$ for $x_{j}\in\mathbb{R}$ then $M_{c}$ is written by 
$$\{\,\exp(s_{2}\zeta_{2}+\dots+s_{n}\zeta_{m})\cdot x \in\mathbb{C}^{m}\mid x\in\mathbb{R}^{m},\, s_{j}\in\mathbb{R},\, \langle\mu(x),\zeta_{j}\rangle=\frac{c_{j}}{2}\, (j=2,\dots,m) \}, $$
where $\zeta_{j}:=(1,0,\dots,0,-1,0,\dots,0)=e_{1}-e_{j}\in\mathbb{R}^{m}$ and $\mu(x):=\frac{1}{2}(x_{1}^{2},\dots,x_{m}^{2})$ 
and we define $\exp(v)\cdot x=(x_{1}e^{2\pi iv_{1}},\dots,x_{m}e^{2\pi iv_{m}})$ for $v=(v_{1},\dots,v_{m})\in\mathbb{R}^{m}$. 
This is a $T^{m-1}$-invariant special Lagrangian submanifold in $\mathbb{C}^{m}$. 

Next, one of examples of special Lagrangian submanifolds in $\mathbb{C}^{m}$ constructed by Joyce in \cite[Example 9.4]{Joyce} is defined by 
$$N_{c}^{a_{1},\dots,a_{m}}:=\{\,(x_{1}e^{2\pi ia_{1}\theta},\dots,x_{m}e^{2\pi ia_{m}\theta})\in\mathbb{C}^{m}\mid \theta\in\mathbb{R},\,a_{1}x_{1}^{2}+\dots+a_{m}x_{m}^{2}=c \}, $$
where $a=(a_{1},\dots,a_{m})\in\mathbb{R}^{m}$. 
Joyce constructed this example by using a moment map of $T^{1}$-action on $\mathbb{C}^{m}$. 
Of course, in the same way as $M_{c}$, $N_{c}^{a_{1},\dots,a_{m}}$ is written by 
$$\{\,\exp(a\theta)\cdot x \mid x\in\mathbb{R}^{m},\,\theta\in\mathbb{R},\,\langle\mu(x),a\rangle=\frac{c}{2}\,\}. $$
This is a $T^{1}$-invariant special Lagrangian submanifold in $\mathbb{C}^{m}$. 

These two examples suggest that a torus action, a real structure and a moment map are useful to construct special Lagrangian submanifolds. 
From this view point, the author generalized Joyce's example $N_{c}^{a_{1},\dots,a_{m}}$ in $\mathbb{C}^{m}$ to 
in an $m$-dimensional toric almost Calabi--Yau cone manifold in \cite{Yamamoto}. 
That is, the author constructed examples of special Lagrangian submanifolds of the form 
$$\{\,\exp(t\zeta)\cdot p \mid p\in M^{\sigma},\, t\in\mathbb{R},\, \langle\mu(p),\zeta\rangle=c \,\}$$
in almost Calabi--Yau cone manifolds $(M,\omega,g,J)$, where $M^{\sigma}$ is the real form of $M$, $\mu$ is a moment map of $T^{m}$-action on $M$, 
$\zeta$ is a vector in $\mathbb{R}^{m}$ satisfying a special condition and $c$ is a constant. 
This is a $T^{1}$-invariant special Lagrangian submanifold in almost Calabi--Yau cone manifolds $(M,\omega,g,J)$. 

This type of constructions is also effective to construct examples of Hamiltonian stationary Lagrangian submanifolds. 
Actually in $\mathbb{C}^{m}$ Lee and Wang \cite{LeeWang} proved that $V_{t}$ defined by 
\begin{align*}
\biggl\{\, (x_{1}e^{2\pi i\zeta_{1}s}&,\dots,x_{m}e^{2\pi i\zeta_{m}s})  \in \mathbb{R}^{m} \,\bigg|\, 0\leq s \leq 1, \\
 &\sum_{j=1}^{m}\zeta_{j}x^{2}_{j}=-4\pi t\sum_{j=1}^{m}\zeta_{j},\, x=(x_{1},\dots,x_{m})\in\mathbb{R}^{m}\,\biggr\}
\end{align*}
is a Hamiltonian stationary Lagrangian submanifold for all $\zeta\in\mathbb{R}^{m}$ and $c\in\mathbb{R}$. 
Furthermore they proved that this family $\{V_{t}\}_{t\in\mathbb{R}}$ is a solutions of Brakke flow. 
Here Brakke flow is a weak formulation of a mean curvature flow with singularities proposed by Brakke in \cite{Brakke}. 

To get a special Lagrangian submanifold in a given Calabi--Yau manifold, 
a mean curvature flow is one of potential approaches since 
if a long time solution of a mean curvature flow starting from a fixed Lagrangian submanifold exists 
and converges to a smooth manifold then it is a minimal Lagrangian submanifold that is a special Lagrangian submanifold. 
However a mean curvature flow does not have a long time solution in general and it develops singularities. 
Thus it is meaningful to construct examples of Lagrangian mean curvature flows with singularities to understand the motion of Lagrangian mean curvature flows 
and to develop this strategy. 

In this paper we construct explicit examples of special or weighted Hamiltonian stationary Lagrangian submanifolds in toric almost Calabi--Yau manifolds and 
construct solutions of generalized Lagrangian mean curvature flows with singularities and topological changes starting from these examples. 
These examples can be considered as some kind of generalization of examples of Lee and Wang \cite{LeeWang} in $\mathbb{C}^{m}$ to toric almost Calabi--Yau manifolds. 
When the ambient space is a general toric almost Calabi--Yau manifold then its topology is not simple and there are many fixed points of torus action 
hence we can get examples of special or weighted Hamiltonian stationary Lagrangian submanifolds with various topologies 
and its generalized Lagrangian mean curvature flow develops singularities many times 
though examples of Lee and Wang in $\mathbb{C}^{m}$ develops a singularity once. 
In this paper we use notions of {\it weighted} Hamiltonian stationary and {\it generalized} Lagrangian mean curvature flow. 
These notions are modifications of the ordinary notions of Hamiltonian stationary and Lagrangian mean curvature flow defined in Calabi--Yau manifolds, 
since we use almost Calabi--Yau manifolds rather than Calabi--Yau manifolds, see Section \ref{angle} for precise definitions. 

We now give a description of the main results of this paper. 
Let $(M,\omega,g,J, \Omega_{\gamma})$ be a real $2m$-dimensional toric almost Calabi--Yau manifold with torus $T^{m}$ action that is a toric K\"ahler manifold 
with a nonvanishing holomorphic $(m,0)$-form $\Omega_{\gamma}$ defined by a vector $\gamma$ in $\mathbb{Z}^{m}$, 
here $\gamma$ is canonically determined by the toric structure of $(M,J)$, 
see Section \ref{angle} for the definition of $\Omega_{\gamma}$. 
Note that we do not assume that $(M,\omega,g,J)$ is Ricci-flat. 
Since $(M,\omega,g,J)$ is a toric K\"ahler manifold there exist 
a moment map $\mu:M\rightarrow \Delta$ with a moment polytope $\Delta$ 
and an anti-holomorphic and anti-symplectic involution $\sigma:M\rightarrow M$, see Section \ref{TKm} for more precisely settings. 
We denote the fixed point set of $\sigma$ by $M^{\sigma}$ and call it the real form of $M$. 
This is a real $m$-dimensional submanifold in $M$. 
Fix an integer $n$ with $0 \leq n \leq m$. 
Take a set of $n$ vectors $\zeta=\{\,\zeta_{1},\dots,\zeta_{n}\,\}\subset\mathbb{Z}^{m}$ and a set of $n$ constants $c=\{\,c_{1},\dots,{c}_{n}\,\}\subset\mathbb{R}$ 
and consider the set 
$$M^{\sigma}_{\zeta,c}:=\{\, p\in M^{\sigma} \mid \langle\mu(p), \zeta_{i}\rangle=c_{i},\, i=1\dots,n \,\} .$$
We assume that $M^{\sigma}_{\zeta,c}$ is a real $(m-n)$-dimensional submanifold in $M^{\sigma}$ and 
$T_{\zeta}:=V_{\zeta}/(V_{\zeta}\cap\mathbb{Z}^{m})$ is isomorphic to a subtorus $T^{n}$ of $T^{m}$, where $V_{\zeta}:=\mathrm{Span}_{\mathbb{R}}\{\zeta_{1},\dots,\zeta_{n}\}$. 
Then we put a real $m$-dimensional manifold as 
\begin{align}\label{form}
L_{\zeta,c}:=M^{\sigma}_{\zeta,c}\times T_{\zeta}
\end{align}
and define a map $F_{\zeta,c}:L_{\zeta,c}\rightarrow M$ by 
$$F_{\zeta,c}(p,[v]):=\exp v\cdot p. $$
\begin{main}
$F_{\zeta,c}:L_{\zeta,c}\rightarrow M$ is a Lagrangian immersion and its Lagrangian angle $\theta_{\zeta,c}:L_{\zeta,c}\rightarrow \mathbb{R}/\pi\mathbb{Z}$ is 
given by $\theta_{\zeta,c}(p,[v])=2\pi\langle \gamma,v\rangle + \frac{\pi}{2}n\,(\mathrm{mod}.\,\pi)$ 
and $F_{\zeta,c}:L_{\zeta,c}\rightarrow M$ is a $T^{n}$-invariant weighted Hamiltonian stationary Lagrangian submanifold for all $\zeta$ and $c$. 
\end{main}

\begin{corollary}
$F_{\zeta,c}:L_{\zeta,c}\rightarrow M$ is a special Lagrangian submanifold if and only if 
$\langle\gamma,\zeta_{i}\rangle=0$ for all $i=1,\dots,n$. 
\end{corollary}

\begin{main}
The family of the images of $\{F_{\zeta,c(t)}:L_{\zeta,c(t)}\rightarrow M\}_{0\leq t\leq T}$ is a solution of 
generalized Lagrangian mean curvature flow with singularities and topological changes with initial condition $F_{\zeta,c}$, 
where $c(t):=\{\, c_{1}(t),\dots,c_{n}(t)\,\}$ and each $c_{j}(t)$ is given by $c_{j}(t):=c_{j}-2\pi t\langle\gamma,\zeta_{j}\rangle$. 
Here $T$ is the first time that $M^{\sigma}_{\zeta,c(t)}$ becomes empty. 
\end{main}

The definition of Lagrangian angle is given in Section \ref{angle} 
and the notion of generalized Lagrangian mean curvature flow with singularities and topological changes is defined in Section \ref{MCF}. 
Roughly speaking, this flow is parametrized by a smooth flow except some $m$-dimensional Hausdorff measure zero sets. 

We note that the example $M_{c}$ of Harvey and Lawson is the case when $n=m-1$,  
and $N_{c}^{a_{1},\dots,a_{m}}$ of Joyce, $V_{t}$ of Lee and Wang and the previous work of the author in \cite{Yamamoto} are the case when $n=1$. 
After finishing my work, I learned from H. Konno that the Mironov and Panov \cite{MironovPanov} constructed 
examples of $T^{n}$-invariant Hamiltonian stationary Lagrangian submanifolds in $m$-dimensional toric varieties for $0\leq n\leq m$. 
First Mironov \cite{Mironov} constructed $T^{n}$-invariant Hamiltonian stationary or minimal Lagrangian submanifolds in $\mathbb{C}^{m}$ and $\mathbb{CP}^{m}$.  
These examples can be written as the form (\ref{form}) in $\mathbb{C}^{m}$. 
In \cite{MironovPanov}, they used a K\"ahler quotient of $\mathbb{C}^{m}$ to construct new examples in toric varieties. 
Our method is different from theirs in that we use the real form and a moment map rather than K\"ahler quotient to construct examples and 
furthermore we study motion of generalized Lagrangian mean curvature flows starting from these examples. 
 \subsection*{Acknowledgements}
 I would like to thank to Professor Akito Futaki for his comments and I also thank to 
 Professor Hiroshi Konno for letting me know the work of Mironov and Panov. 
\section{Toric K\"ahler manifold}\label{TKm}
Let $T^{m}\cong (S^{1})^{m}$ be an $m$-dimensional real torus and $(M,\omega, g, J)$ be a toric K\"ahler manifold with complex dimension $m$. 
Then $T^{m}$ acts on $M$ effectively and the K\"ahler form $\omega$ is invariant under the action. 
Let $\mu:M\rightarrow \mathfrak{g}^{*}$ be a moment map and $\Delta:=\mu(M)$ be a moment polytope, 
where $\mathfrak{g}$ is a Lie algebra of $T^{m}$ and $\mathfrak{g}^{*}$ is its dual. 
Since $(M,J)$ is a toric variety, there is a complex torus $T^{m}_{\mathbb{C}}\cong (\mathbb{C}^{\times})^{m}$ 
which is a complexification of $T^{m}$ and $T^{m}_{\mathbb{C}}$ acts on $(M,J)$ as biholomorphic automorphisms. 
Then $M$ has an open dense $T^{m}_{\mathbb{C}}$-orbit and we denote the  fan of $(M,J)$ by $\Sigma$. 
Let $\Sigma(1):=\{\, \rho\in\Sigma\mid\dim\rho=1\, \}$ be a set of 1-dimensional cones in $\Sigma$. 
We assume that $\Sigma(1)$ is a finite set and write $\Sigma(1)=\{\rho_{1},\dots,\rho_{d}\}$. 
Let $\lambda_{i}$ be the primitive element that generates $\rho_{i}$ for $i=1,\dots,d$, 
that is, $\rho_{i}=\mathbb{R}^{+}\lambda_{i}$. 
Note that, in general, $\Delta$ is not a closed subset in $\mathfrak{g}^{*}$. 
For example, if we consider a toric K\"ahler manifold constructed by removing all fixed points of torus action from some toric K\"ahler manifold, 
then its moment polytope has a shape that all vertices are removed from the original polytope and this is not a closed subset. 

We assume that there exist $\kappa_{i}$ in $\mathbb{R}$ for $i=1,\dots,d$ so that the closure of $\Delta$ is given by
\begin{align*}
\overline{\Delta}=\bigcap_{i=1}^{d}H_{\lambda_{i},\kappa_{i}}^{+}.
\end{align*} 
Here for a nonzero vector $\lambda$ in $\mathfrak{g}$ and $\kappa$ in $\mathbb{R}$, 
we define the affine hyperplane $H_{\lambda,\kappa}$ and closed half-space $H_{\lambda,\kappa}^{+}$ by
\begin{align*}
H_{\lambda,\kappa}:=\{\, y\in\mathfrak{g}^{*}\mid\langle y, \lambda\rangle=\kappa\, \}\quad \mathrm{and} \quad 
H_{\lambda,\kappa}^{+}:=\{\, y\in\mathfrak{g}^{*}\mid\langle y, \lambda\rangle\geq \kappa\, \}. 
\end{align*}
A subset $F\subset\overline{\Delta}$ is called a face of $\overline{\Delta}$ if and only if 
there exist a vector $v$ in $\mathfrak{g}$ and a constant $c$ such that 
\begin{align*}
\overline{\Delta}\subset H_{v,c}^{+}\quad\mathrm{and}\quad F=\overline{\Delta}\cap H_{v,c}. 
\end{align*}
We denote the set of all faces of $\overline{\Delta}$ by $\mathcal{F}$. 
Then there exists a subset $\mathcal{G}$ of $\mathcal{F}$ such that $\Delta$ is of the form 
\begin{align*}
\overline{\Delta}-\bigcup_{F\in\mathcal{G}}F. 
\end{align*}
For a point $y$ in $\Delta$, we define $\mathfrak{z}_{y}$ a subspace of $\mathfrak{g}$ by 
\begin{align*}
\mathfrak{z}_{y}:=\mathrm{Span}_{\mathbb{R}}\{\, \lambda_{i}\mid y\in H_{\lambda_{i},\kappa_{i}}\, \}. 
\end{align*}
For example, if $y$ is in the interior of $\Delta$ then $\mathfrak{z}_{y}$ is $\{0\}$. 
For a point $p$ in $M$, if we denote the stabilizer at $p$ by $Z_{p}=\{\, t\in T^{m}\mid t\cdot p=p\, \}$, 
then the Lie algebra of $Z_{p}$ coincides with $\mathfrak{z}_{\mu(p)}$. 
Thus, if $\mu(p)$ is in the interior of $\Delta$ then torus action is free at $p$, 
and if $\mu$ maps $p$ to a vertex of $\Delta$ then $p$ is a fixed point. 

Since $(M,J)$ is a toric variety, there exists the intrinsic anti-holomorphic involution $\sigma:M\rightarrow M$ determined by fan $\Sigma$, 
that is, $\sigma^2=id$ and $\sigma_{*}J=-J\sigma_{*}$, where $J$ is the complex structure on $M$. 
This involution satisfies $\sigma(u\cdot p)=\overline{u}\cdot \sigma(p)$, where $u\in T^{m}_{\mathbb{C}}$ acts on $p$. 
Let $M^{\sigma}:=\{\, p\in M\mid \sigma(p)=p\,\}$ be the set of fixed points of $\sigma$, 
that is a submanifold of $M$ with real dimension $m$, we call it the real form of $M$. 

\begin{proposition}
The involution $\sigma:M\rightarrow M$ is anti-symplectic, and consequently $\sigma$ is isometry. 
\end{proposition}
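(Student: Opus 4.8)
The plan is to verify the anti-symplectic identity $\sigma^{*}\omega=-\omega$ first on the open dense $T^{m}_{\mathbb{C}}$-orbit $M_{0}\subset M$, extend it to all of $M$ by continuity, and then deduce that $\sigma$ is an isometry purely formally from the K\"ahler relation $g(\,\cdot\,,\,\cdot\,)=\omega(\,\cdot\,,J\,\cdot\,)$ together with the anti-holomorphy $\sigma_{*}J=-J\sigma_{*}$.

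First I would note that $\sigma$ preserves $M_{0}$: since $\sigma$ is a diffeomorphism and $\sigma(u\cdot p)=\overline{u}\cdot\sigma(p)$, the image $\sigma(M_{0})$ is again a single $T^{m}_{\mathbb{C}}$-orbit which is open and dense, hence equal to $M_{0}$. Fixing a base point $q\in M_{0}$, I identify $M_{0}\cong T^{m}_{\mathbb{C}}\cong(\mathbb{C}^{\times})^{m}$ via $u\mapsto u\cdot q$ and write $u=e^{s+i\phi}$ with $s\in\mathbb{R}^{m}$ and $\phi\in(\mathbb{R}/2\pi\mathbb{Z})^{m}$. In the action--angle coordinates $(y,\phi)$ on $M_{0}$, where $\phi$ are the angle coordinates of the $T^{m}$-action and $y:=\mu$, one has the standard normal form $\omega=\sum_{j}dy_{j}\wedge d\phi_{j}$, and $y$ depends only on $s$ because $\mu$ is $T^{m}$-invariant. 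Now $\sigma(u\cdot q)=\overline{u}\cdot\sigma(q)$, and $\sigma(q)=u_{0}\cdot q$ for some $u_{0}\in T^{m}_{\mathbb{C}}$ with $|u_{0}|=1$ (forced by $\sigma^{2}=\mathrm{id}$); therefore $\sigma$ sends $e^{s+i\phi}\cdot q$ to $e^{s+i(-\phi+\psi_{0})}\cdot q$ for a constant $\psi_{0}$, i.e.\ in these coordinates $\sigma(y,\phi)=(y,-\phi+\psi_{0})$ (after moving $q$ inside its orbit one may even take $q\in M^{\sigma}\cap M_{0}$ and get $\psi_{0}=0$). In particular $\mu\circ\sigma=\mu$. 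Since $\psi_{0}$ is constant, $\sigma^{*}dy_{j}=dy_{j}$ and $\sigma^{*}d\phi_{j}=-d\phi_{j}$, so $\sigma^{*}\omega=-\omega$ on $M_{0}$, and as $M_{0}$ is dense and both sides are smooth on $M$, $\sigma^{*}\omega=-\omega$ everywhere.

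For the isometry statement I would then just compute, for tangent vectors $X,Y$ and using $J\sigma_{*}Y=-\sigma_{*}(JY)$, that $(\sigma^{*}g)(X,Y)=\omega(\sigma_{*}X,J\sigma_{*}Y)=-\omega(\sigma_{*}X,\sigma_{*}(JY))=-(\sigma^{*}\omega)(X,JY)=\omega(X,JY)=g(X,Y)$, so $\sigma^{*}g=g$. The main obstacle is really only the coordinate set-up on $M_{0}$: one must choose coordinates adapted simultaneously to $J$ (so that $\sigma$ becomes complex conjugation up to a compact-torus rotation) and to $\omega$ (so that $\omega$ takes the Darboux form $\sum dy_{j}\wedge d\phi_{j}$), and then check the normalizations that reduce $\sigma$ to $(y,\phi)\mapsto(y,-\phi+\psi_{0})$; everything afterwards is automatic. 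A coordinate-free alternative for the anti-symplectic part: differentiating $\sigma(\exp(tX)\cdot p)=\exp(-tX)\cdot\sigma(p)$ shows that $\sigma$ intertwines each fundamental vector field $X_{M}$ with $-X_{M}$, so $\iota_{X_{M}}(-\sigma^{*}\omega)=\iota_{X_{M}}\omega$; since $-\sigma^{*}\omega$ is, like $\omega$, a $T^{m}$-invariant K\"ahler form for $J$, and it has the same moment map as $\omega$ (using $\mu\circ\sigma=\mu$), the uniqueness of invariant K\"ahler forms with prescribed moment map and fixed complex structure forces $-\sigma^{*}\omega=\omega$.
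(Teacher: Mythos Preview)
Your argument is correct and follows the same outline as the paper: verify $\sigma^{*}\omega=-\omega$ on the open dense $T^{m}_{\mathbb{C}}$-orbit using coordinates in which both $\sigma$ and $\omega$ are explicit, then extend by density. The only difference is cosmetic---the paper uses logarithmic holomorphic coordinates $z^{j}=x^{j}+iy^{j}$ and the $T^{m}$-invariant K\"ahler potential (so $\omega=\tfrac{i}{2}\sum F_{ij}\,dz^{i}\wedge d\bar z^{j}$ with $F$ depending only on $x$, and $\sigma(z)=\bar z$), whereas you use the dual action--angle picture $\omega=\sum dy_{j}\wedge d\phi_{j}$; your explicit isometry computation from $g=\omega(\cdot,J\cdot)$ is exactly the ``consequently'' the paper leaves implicit.
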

\begin{proof}
Let $U$ be an open dense $T^{m}_{\mathbb{C}}$-orbit. 
For $(w^{1},\dots,w^{m})\in U\cong (\mathbb{C}^{\times})^m$, 
we take the logarithmic holomorphic coordinates $(z^{1},\dots,z^{m})$ with $e^{z^i}=w^i$. 
Since $\omega$ is $T^m$-invariant and the action of $T^m$ is Hamiltonian, 
there exists a function $F\in C^{\infty}(\mathbb{R}^m)$ with the property
\begin{align}
\omega=\frac{\sqrt{-1}}{2}\sum_{i,j=1}^{m} \frac{\partial^{2} F}{\partial x^{i} \partial x^{j}}dz^{i}\wedge d\overline{z}^{j}\quad\mathrm{on}\, \,U,
\end{align}
where $z^{i}=x^{i}+\sqrt{-1}y^{i}$. 
(See Theorem 3.3 in Appendix 2 of \cite{Guillemin2}.)
On $U$, the involution $\sigma$ coincides with the standard complex conjugate $\sigma(z)=\overline{z}$, 
where $\overline{z}=(\overline{z}^{1},\dots,\overline{z}^{m})$. 
Since $\omega$ is $T^{m}$-invariant, note that $F$ is independent of the coordinates $(y^{1},\dots,y^{m})$. 
Thus we have $\sigma^{*}\omega=-\omega$ on $U$. 
Since $U$ is open and dense in $M$, thus we have $\sigma^{*}\omega=-\omega$ on $M$. 
\end{proof}
\section{Lagrangian submanifold}\label{Lag}
Let $n$ be an integer with $0 \leq n \leq m$. 
Take a set of $n$ vectors $\zeta=\{\zeta_{i}\}_{i=1}^{n}\subset\mathfrak{g}$ and a set of $n$ constants $c=\{c_{i}\}_{i=1}^{n}\subset\mathbb{R}$. 
If $n=0$, we take no vectors and no constants. 
We assume that $\{\zeta_{i}\}_{i=1}^{n}$ is linearly independent. 
Then the intersection of $n$ affine hyperplanes $H_{\zeta_{i},c_{i}}$ defines a $(m-n)$-dimensional affine plane. 
We assume that this affine plane intersects in the interior of $\Delta$, 
and we define $\Delta_{\zeta,c}$ a subset of $\Delta$ by
\begin{align*}
\Delta_{\zeta,c}:&=\Delta\cap\biggl( \bigcap_{i=1}^{n}H_{ \zeta_{i}, c_{i} } \biggr)\\
& =\{\,  y \in \Delta \mid \langle y, \zeta_{i} \rangle =c_{i}, \, (i=1,\dots,n)    \,\}. 
\end{align*}

\begin{definition}
Let $V_{\zeta}:=\mathrm{Span}_{\mathbb{R}}\{\zeta_{1},\dots,\zeta_{n}\} \subset \mathfrak{g}$. 
We call a point $y$ in $\Delta$ a $\zeta$-singular point if and only if $V_{\zeta}\cap\mathfrak{z}_{y}\neq\{0\}$, 
and if $V_{\zeta}\cap\mathfrak{z}_{y}=\{0\}$ we call $y$ a $\zeta$-regular point.  
We denote the set of all $\zeta$-singular points and all $\zeta$-regular points in $\Delta$ by $\Delta_{\zeta sing}$ and $\Delta_{\zeta reg}$ respectively. 
Note that $\Delta_{\zeta reg}$ is open dense in $\Delta$. 
\end{definition}
For a point $p$ in $M$, a vector $v$ in $\mathfrak{g}$ generates a tangent vector at $p$ denoted by
$$v_{p}=\frac{d}{dt}\biggl|_{t=0}\exp(tv)\cdot p. $$
This map $\mathfrak{g}\rightarrow T_{p}M$ is a homomorphism. 
Then it is clear that $y$ is a $\zeta$-regular point if and only if the restricted homomorphism $V_{\zeta}\rightarrow T_{p}M$ is injective for a $p$ in $\mu^{-1}(y)$. 
For example, vertices of $\Delta$ are always $\zeta$-singular points and interior points are always $\zeta$-regular points. 

\begin{definition}
We call a point $p$ in $M^{\sigma}$ a $\zeta$-singular point if and only if $\mu(p)$ is a $\zeta$-singular point, 
and if not, we call $p$ a $\zeta$-regular point.  
We denote the set of all $\zeta$-singular points and all $\zeta$-regular points in $M^{\sigma}$ by $M^{\sigma}_{\zeta sing}$ and $M^{\sigma}_{\zeta reg}$ respectively. 
\end{definition}
Note that $M^{\sigma}_{\zeta reg}$ is open dense in $M^{\sigma}$. 

\begin{definition}
We denote the restriction of the moment map on the real form by $\mu^{\sigma}:M^{\sigma}\rightarrow \mathbb{R}^m$. 
We define a subset of $M^{\sigma}$ as the pull-back of $\Delta_{\zeta,c}$ by $\mu^{\sigma}$ by
\begin{align*}
M^{\sigma}_{\zeta,c}:& =(\mu^{\sigma})^{-1}(\Delta_{\zeta,c})\\
& =\{\, p\in M^{\sigma} \mid \langle\mu(p), \zeta_{i}\rangle=c_{i},\, i=1\dots,n \,\}. 
\end{align*}
\end{definition}

\begin{proposition}
If $\Delta_{\zeta,c}$ is contained in $\Delta_{\zeta reg}$, 
then $M^{\sigma}_{\zeta,c}$ is a smooth submanifold of $M^{\sigma}$ 
with $\dim_{\mathbb{R}}M^{\sigma}_{\zeta,c}=m-n$.  
\end{proposition}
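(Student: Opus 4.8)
The plan is to reduce the statement about $M^{\sigma}_{\zeta,c}$ to a statement about $\Delta_{\zeta,c}$ via the submersion $\mu^{\sigma}$, and then analyze $\mu^{\sigma}$ on the real form using the explicit logarithmic coordinates from the proof of Proposition~2.1. First I would recall that on the open dense orbit $U$, writing $w^i = e^{z^i}$ with $z^i = x^i + \sqrt{-1}\,y^i$, the real form $M^{\sigma}\cap U$ is cut out by $y^i \in \frac{1}{2}\pi\mathbb{Z}$, so each connected component is parametrized by $x = (x^1,\dots,x^m) \in \mathbb{R}^m$ via the map $x \mapsto (\pm e^{x^1},\dots,\pm e^{x^m})$ up to sign choices. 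In these coordinates the moment map is $\mu = \frac{\partial F}{\partial x}$ (the Legendre/gradient map of the convex potential $F$), so $\mu^{\sigma}$ restricted to such a component is exactly the gradient map $x \mapsto \nabla F(x)$. Because $F$ is strictly convex (as $\omega$ is a genuine K\"ahler form, the Hessian $(\partial^2 F/\partial x^i\partial x^j)$ is positive definite), this gradient map is a diffeomorphism from $\mathbb{R}^m$ onto the interior of $\Delta$, hence $\mu^{\sigma}$ is a submersion away from the lower-dimensional orbits.

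Next I would handle the possibility that $\Delta_{\zeta,c}$ meets the boundary strata of $\Delta$, i.e. points $y$ where some $\lambda_i$'s enter $\mathfrak{z}_y$. The hypothesis is that $\Delta_{\zeta,c}\subset\Delta_{\zeta reg}$, meaning $V_\zeta\cap\mathfrak{z}_y=\{0\}$ for every $y\in\Delta_{\zeta,c}$. I would argue locally near a point $p\in M^{\sigma}$ with $\mu(p)=y$ on a boundary stratum: using an equivariant Darboux-type chart (or the local normal form for toric manifolds near such orbits), the $\zeta$-regularity condition $V_\zeta\cap\mathfrak{z}_y=\{0\}$ guarantees that the composition $V_\zeta \hookrightarrow \mathfrak{g} \twoheadrightarrow \mathfrak{g}/\mathfrak{z}_y$ is injective, which is precisely what is needed for the $n$ functions $p\mapsto\langle\mu(p),\zeta_i\rangle$ to have linearly independent differentials along $M^{\sigma}$ at $p$. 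Equivalently, one observes that $d\mu^{\sigma}_p$ has image containing a complement of $\mathrm{Ann}(\mathfrak{z}_y)$ in the appropriate sense, so restricting to the $\zeta_i$-directions and using linear independence of $\{\zeta_i\}$ modulo $\mathfrak{z}_y$ yields that $\mathrm{d}(\langle\mu^{\sigma}(\cdot),\zeta_1\rangle,\dots,\langle\mu^{\sigma}(\cdot),\zeta_n\rangle)_p$ is surjective onto $\mathbb{R}^n$.

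Granting that surjectivity at every point of $M^{\sigma}_{\zeta,c}$, the conclusion follows from the regular value / preimage theorem: $M^{\sigma}_{\zeta,c} = (\mu^{\sigma})^{-1}(\Delta_{\zeta,c})$ is the common zero set of the $n$ functions $p\mapsto\langle\mu(p),\zeta_i\rangle - c_i$ on the $m$-dimensional manifold $M^{\sigma}$, these functions have everywhere-independent differentials on the zero set, hence $M^{\sigma}_{\zeta,c}$ is a smooth submanifold of codimension $n$, i.e. $\dim_{\mathbb{R}} M^{\sigma}_{\zeta,c} = m-n$. One should also note that $M^{\sigma}_{\zeta,c}$ is closed in $M^{\sigma}$ only if $\Delta_{\zeta,c}$ is closed in $\Delta$; the statement only claims it is a smooth submanifold, so I would not belabor properness.

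The main obstacle I expect is the boundary analysis: away from the open dense orbit the clean formula $\mu^{\sigma}=\nabla F$ breaks down, and one must verify the differential condition on the lower-dimensional orbits where the torus action on $M^{\sigma}$ has nontrivial (finite or positive-dimensional) stabilizer. The key point to get right is translating $V_\zeta\cap\mathfrak{z}_{\mu(p)}=\{0\}$ into the rank statement for $d\mu^{\sigma}$; this is where the local normal form for the toric structure near $p$ (together with the compatibility $\sigma(u\cdot p)=\bar u\cdot\sigma(p)$, which forces $M^{\sigma}$ to be transverse to the orbit directions in a controlled way) does the real work. Everything else — strict convexity of $F$, the regular value theorem — is standard.
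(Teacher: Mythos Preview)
Your argument is correct but takes a considerably more roundabout route than the paper's. You work in explicit logarithmic coordinates on the dense orbit (where $\mu^{\sigma}=\nabla F$ and strict convexity of the potential gives the submersion property) and then invoke a local normal form near the boundary strata to handle the remaining points, splitting the analysis into two cases. The paper bypasses all of this with a single intrinsic observation valid at every point simultaneously: setting $f_i(p)=\langle\mu(p),\zeta_i\rangle-c_i$, the defining property of the moment map gives $df_i(p)=-\omega(\zeta_{i,p},\cdot)$ directly, with no coordinates needed. The $\zeta$-regularity of $\mu(p)$ says exactly that $V_\zeta\to T_pM$, $\xi\mapsto\xi_p$, is injective, so $\zeta_{1,p},\dots,\zeta_{n,p}$ are linearly independent; since $\omega$ is nondegenerate (and, implicitly, since $J\zeta_{i,p}\in T_pM^{\sigma}$ by the anti-holomorphicity of $\sigma$, so these forms do not die upon restriction to the real form), the $df_i$ remain linearly independent on $T_pM^{\sigma}$, and the implicit function theorem finishes. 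Your coordinate approach makes the geometry of $\mu^{\sigma}$ more explicit and would be useful if one wanted quantitative control, but the moment-map identity $d\langle\mu,\xi\rangle=-\iota_{\xi_p}\omega$ is the natural tool and collapses the whole argument, interior and boundary alike, to a couple of lines.
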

\begin{proof}
We define $n$ functions $f_{i}$ ($i=1,\dots,n$) on $M^{\sigma}$ by 
$$f_{i}(p):=\langle \mu(p),\zeta_{i}\rangle -c_{i}.$$ 
Then $M^{\sigma}_{\zeta,c}=\{\, p\in M^{\sigma} \mid f_{i}(p)=0,\, i=1,\dots,n \,\}$. 
By a property of the moment map, for all $p$ in $M^{\sigma}_{\zeta,c}$ we have 
\begin{align*}
df_{i}(p)=d\langle \mu,\zeta_{i}\rangle(p)=-\omega(\zeta_{i,p},\cdot). 
\end{align*}
Since every point in $\Delta_{\zeta,c}$ is $\zeta$-regular, 
the restricted homomorphism $V_{\zeta}\rightarrow T_{p}M$ is injective for all $p$ in $M^{\sigma}_{\zeta,c}$. 
Thus $\{df_{i}\}_{i=1}^{n}$ are linearly independent $1$-forms on $M^{\sigma}_{\zeta,c}$. 
This means that $M^{\sigma}_{\zeta,c}$ is a smooth submanifold of $M^{\sigma}$ by the implicit function theorem. 
\end{proof}
In this section we assume that $\Delta_{\zeta,c}$ is contained in $\Delta_{\zeta reg}$. 
Then $M^{\sigma}_{\zeta,c}$ is a smooth submanifold of $M^{\sigma}$. 
Let $\exp:\mathfrak{g}\rightarrow T^{m}$ be the exponential map. 
Let $\mathbb{Z}_{\mathfrak{g}}(\cong\mathbb{Z}^m)$ be a integral lattice of $\mathfrak{g}$, 
that is a kernel of $\exp:\mathfrak{g}\rightarrow T^{m}$ and $\mathfrak{g}/\mathbb{Z}_{\mathfrak{g}}\cong T^{m}$. 
Let $\frac{1}{2}\mathbb{Z}_{\mathfrak{g}}$ be the set of all elements 
$y$ in $\mathfrak{g}$ such that $2y$ is in $\mathbb{Z}_{\mathfrak{g}}$. 
Then $\frac{1}{2}\mathbb{Z}_{\mathfrak{g}}/\mathbb{Z}_{\mathfrak{g}}\cong\{1,-1\}^{m}$ is a subgroup of $T^{m}$ 
considered as all elements $t$ in $T^{m}$ such that $t^2=e$ identity element. 
Let $V_{\zeta}=\mathrm{Span}_{\mathbb{R}}\{\zeta_{1},\dots,\zeta_{n}\}\subset \mathfrak{g}$. 
Now we construct a manifold $L_{\zeta,c}$ with real dimension $m$. 
 \\
 
{\bf(I) Generic case.} 
For a generic case, let $U$ be an open small ball in $V_{\zeta}$ centered at $0$ such that $U$ and $\frac{1}{2}\mathbb{Z}_{\mathfrak{g}}$ intersect only at $0$. 
Then we define an $m$-dimensional manifold $L_{\zeta,c}$ and a map $F_{\zeta,c}:L_{\zeta,c}\rightarrow M$ by 
$$L_{\zeta,c}=M^{\sigma}_{\zeta,c}\times U\quad and \quad F_{\zeta,c}(p,v):=\exp(v)\cdot p, $$
for $p$ in $M^{\sigma}_{\zeta,c}$ and $v$ in $U$. 
Then $F_{\zeta,c}$ is injective and its image is
\begin{align}\label{L1}
L'_{\zeta,c}:=\{\, \exp(v)\cdot p \mid v \in U,\, p\in M^{\sigma}, \langle \mu(p), \zeta_{j}\rangle=c_{j},\, j=1,\dots,n \,\}. 
\end{align}
 \\
 
\noindent
{\bf(II) Special case.} 
However if the set of vectors $\zeta=\{\zeta_{i}\}_{i=1}^{n}$ satisfies the following special condition we can take $L_{\zeta,c}$ as explained below. 
\begin{definition}
We say that $\zeta$ satisfies the special condition if there exists a set of $n$ vectors $v=\{v_{j}\}_{j=1}^{n}$ in $V_{\zeta}\cap\mathbb{Z}_{\mathfrak{g}}$ such that 
$v$ is a base of $V_{\zeta}$ and $v$ is a generator of $V_{\zeta}\cap\mathbb{Z}_{\mathfrak{g}}$ over $\mathbb{Z}$. 
\end{definition}

If $\zeta$ satisfies the special condition, we replace $U$ in case (I) by $T_{\zeta}:=V_{\zeta}/(V_{\zeta}\cap\mathbb{Z}_{\mathfrak{g}})$ and 
we define an $m$-dimensional manifold $L_{\zeta,c}$ and a map $F_{\zeta,c}:L_{\zeta,c}\rightarrow M$ by 
$$L_{\zeta,c}=M^{\sigma}_{\zeta,c}\times T_{\zeta}\quad and \quad F_{\zeta,c}(p,[v]):=\exp(v)\cdot p, $$
for $p$ in $M^{\sigma}_{\zeta,c}$ and $[v]$ in $T_{\zeta}=V_{\zeta}/(V_{\zeta}\cap\mathbb{Z}_{\mathfrak{g}})$, this map is well defined. 
Since $T_{\zeta}\cong T^{n}$ which is a subtorus of $T^{m}$, $L_{\zeta,c}$ is diffeomorphic to $M^{\sigma}_{\zeta,c}\times T^{m}$. 
We denote the subgroup $(V_{\zeta}\cap\frac{1}{2}\mathbb{Z}_{\mathfrak{g}})/(V_{\zeta}\cap\mathbb{Z}_{\mathfrak{g}})$ of $T_{\zeta}$ by $K_{\zeta}$. 
Then of course $K_{\zeta}$ acts on $T_{\zeta}$ freely and $K_{\zeta}$ also acts on $M^{\sigma}_{\zeta,c}$ as 
$$[k]\cdot p:=\exp(k)\cdot p$$
for $[k]$ in $K_{\zeta}$ and $p$ in $M^{\sigma}_{\zeta,c}$. 
Thus $K_{\zeta}$ acts on $L_{\zeta,c}=M^{\sigma}_{\zeta,c}\times T_{\zeta}$ as a diagonal action and this action is free. 
Hence we have an $m$-dimensional manifold $\tilde{L}_{\zeta,c}$ by 
$$\tilde{L}_{\zeta,c}:=(M^{\sigma}_{\zeta,c}\times T_{\zeta})/K_{\zeta}. $$
In general $F_{\zeta, c}:L_{\zeta,c}\rightarrow M$ is not injective and one can show that 
$F_{\zeta,c}(p_{1},[v_{1}])=F_{\zeta,c}(p_{2},[v_{2}])$ if and only if there exists a $[k]$ in $K_{\zeta}$ such that $[k]\cdot(p_{1},[v_{1}])=(p_{2},[v_{2}])$. 
Thus the image of $F_{\zeta,c}$ written by 
\begin{align}\label{L2}
L'_{\zeta,c}:=\{\, \exp(v)\cdot p \mid v \in V_{\zeta},\, p\in M^{\sigma}, \langle \mu(p), \zeta_{j}\rangle=c_{j},\, j=1,\dots,n \,\}
\end{align} 
is diffeomorphic to $\tilde{L}_{\zeta,c}$. 
Note that $\tilde{L}_{\zeta,c}$ is a $T^{n}$-bundle over a smooth $(m-n)$-dimensional manifold  $M^{\sigma}_{\zeta,c}/K_{\zeta}$. 

\begin{remark}
If we take no vectors $\zeta$ and no constants $c$, that is $\zeta=\emptyset$ and $c=\emptyset$, 
then $L_{\zeta,c}$ becomes the real form $M^{\sigma}$ itself hence $L_{\zeta,c}$ has no torus factors. 
On the other hand, if the number of vectors in $\zeta$ is max, that is $m$, then $M^{\sigma}_{\zeta,c}=\{pt\}$ thus 
$L_{\zeta,c}$ is diffeomorphic to $T^{m}$. 
Hence roughly speaking, the number of vectors in $\zeta$ is the dimension of torus factors in $L_{\zeta,c}$. 
\end{remark}
@\\
 
From now we consider both cases (I) and (II) above. 
\begin{theorem}\label{lag}
$F_{\zeta,c}:L_{\zeta,c}\rightarrow M$ is a Lagrangian submanifold. 
\end{theorem}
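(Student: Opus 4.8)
The plan is to prove that $F_{\zeta,c}$ is a Lagrangian immersion. Since $\dim_{\mathbb{R}}L_{\zeta,c}=\dim_{\mathbb{R}}M^{\sigma}_{\zeta,c}+\dim_{\mathbb{R}}V_{\zeta}=(m-n)+n=m=\tfrac12\dim_{\mathbb{R}}M$, it is enough to show that $dF_{\zeta,c}$ is everywhere injective and that $F_{\zeta,c}^{*}\omega=0$; this handles cases (I) and (II) uniformly, since in both the tangent space to the second factor at a point $v$ (resp.\ $[v]$) is canonically $V_{\zeta}$ and $F_{\zeta,c}$ is given by the same formula. Two structural facts will be used throughout. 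First, $M^{\sigma}$ is Lagrangian: by the Proposition of Section~\ref{TKm} we have $\sigma^{*}\omega=-\omega$, so for $X,Y\in T_{p}M^{\sigma}$, $\omega(X,Y)=\omega(\sigma_{*}X,\sigma_{*}Y)=(\sigma^{*}\omega)(X,Y)=-\omega(X,Y)$, whence $\omega|_{T_{p}M^{\sigma}}=0$, and $\dim_{\mathbb{R}}M^{\sigma}=m$. Second, since $\sigma$ is anti-holomorphic and satisfies $\sigma(\exp(t\zeta_{i})\cdot p)=\exp(-t\zeta_{i})\cdot\sigma(p)$, differentiating at $t=0$ and using $\sigma(p)=p$ for $p\in M^{\sigma}$ gives $\sigma_{*}(\zeta_{i,p})=-\zeta_{i,p}$, so $\zeta_{i,p}$ lies in the $(-1)$-eigenspace of $\sigma_{*}$, which is exactly $N_{p}M^{\sigma}=J(T_{p}M^{\sigma})$; in particular $T_{p}M=T_{p}M^{\sigma}\oplus J(T_{p}M^{\sigma})$.

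Next I would compute the differential. Fix $(p,v)\in L_{\zeta,c}$ and put $q:=\exp(v)\cdot p$. Because $T^{m}$ is abelian, $F_{\zeta,c}(p,v+t\xi)=\exp(t\xi)\cdot q$ for $\xi\in V_{\zeta}$ while $F_{\zeta,c}(p(t),v)=\exp(v)\cdot p(t)$, so on $T_{(p,v)}L_{\zeta,c}=T_{p}M^{\sigma}_{\zeta,c}\oplus V_{\zeta}$ the map $dF_{\zeta,c}$ sends $(w,0)\mapsto(\exp v)_{*}w$ and $(0,\xi)\mapsto\xi_{q}$. For injectivity, suppose $(\exp v)_{*}w+\xi_{q}=0$; applying the isomorphism $(\exp(-v))_{*}$ and using that fundamental vector fields are $T^{m}$-invariant (so $(\exp(-v))_{*}\xi_{q}=\xi_{p}$) gives $w+\xi_{p}=0$ in $T_{p}M=T_{p}M^{\sigma}\oplus J(T_{p}M^{\sigma})$. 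Since $w\in T_{p}M^{\sigma}$ and $\xi_{p}\in J(T_{p}M^{\sigma})$, both vanish; and since $\Delta_{\zeta,c}\subset\Delta_{\zeta reg}$ the homomorphism $V_{\zeta}\to T_{p}M$ is injective, so $\xi=0$. Hence $F_{\zeta,c}$ is an immersion.

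For isotropy I would expand $\omega\bigl((\exp v)_{*}w_{1}+\xi_{1,q},\,(\exp v)_{*}w_{2}+\xi_{2,q}\bigr)$ into four terms. The term $\omega((\exp v)_{*}w_{1},(\exp v)_{*}w_{2})$ equals $\omega(w_{1},w_{2})=0$ since $\exp v$ is a symplectomorphism ($\omega$ being $T^{m}$-invariant) and $M^{\sigma}$ is Lagrangian. Each mixed term, e.g.\ $\omega((\exp v)_{*}w_{1},\xi_{2,q})$, equals $\omega(w_{1},\xi_{2,p})$ by the same invariance together with $T^{m}$-invariance of fundamental fields; writing $\xi_{2}=\sum_{i}a_{i}\zeta_{i}$ and using the moment-map identity $\omega(\zeta_{i,p},\,\cdot\,)=-d\langle\mu,\zeta_{i}\rangle$ recalled in Section~\ref{Lag}, this is $\sum_{i}a_{i}\,d\langle\mu,\zeta_{i}\rangle(w_{1})=0$ because $\langle\mu,\zeta_{i}\rangle\equiv c_{i}$ on $M^{\sigma}_{\zeta,c}$ and $w_{1}\in T_{p}M^{\sigma}_{\zeta,c}$. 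Finally $\omega(\xi_{1,q},\xi_{2,q})=\omega(\xi_{1,p},\xi_{2,p})$ by invariance, and writing $\xi_{k,p}=J\eta_{k}$ with $\eta_{k}\in T_{p}M^{\sigma}$ (possible by the second structural fact), the Kähler identity $\omega(JX,JY)=\omega(X,Y)$ gives $\omega(J\eta_{1},J\eta_{2})=\omega(\eta_{1},\eta_{2})=0$ since $M^{\sigma}$ is Lagrangian. Thus $F_{\zeta,c}^{*}\omega=0$, and with the dimension count $F_{\zeta,c}$ is a Lagrangian immersion (injective with image (\ref{L1}) in case (I); in case (II) the image is the set (\ref{L2})).

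I expect the main obstacle to be the isotropy computation, specifically the last term $\omega(\xi_{1,p},\xi_{2,p})$: unlike the two mixed terms it is not controlled by the moment map alone, and one genuinely needs the interplay between the anti-holomorphicity of $\sigma$ (to put $\xi_{k,p}$ into $J(T_{p}M^{\sigma})$) and the Kähler identity together with $M^{\sigma}$ being Lagrangian. The mixed terms are where the defining equations $\langle\mu,\zeta_{i}\rangle=c_{i}$ of $M^{\sigma}_{\zeta,c}$—not merely of $M^{\sigma}$—enter, so some care is needed to work with $T_{p}M^{\sigma}_{\zeta,c}$ rather than $T_{p}M^{\sigma}$ there.
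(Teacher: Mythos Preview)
Your proof is correct and follows essentially the same route as the paper's: decompose $T_{(p,v)}L_{\zeta,c}=T_{p}M^{\sigma}_{\zeta,c}\oplus V_{\zeta}$, use that $\sigma_{*}$ fixes $T_{p}M^{\sigma}$ and negates the fundamental fields $\xi_{p}$ to get injectivity (the paper phrases this as $g$-orthogonality of the two summands, which is the same eigenspace argument since $\sigma$ is an isometry), and then use the moment-map identity for the mixed terms and the anti-symplecticity of $\sigma$ for the remaining two. The only cosmetic difference is that the paper leaves the vanishing of $\omega(X_{1},X_{2})$ and $\omega(Y_{1,p},Y_{2,p})$ as ``easy to see'' (both follow immediately from $\sigma^{*}\omega=-\omega$ applied to $\pm1$-eigenvectors of $\sigma_{*}$), whereas you spell out the second case via $\xi_{p}\in J(T_{p}M^{\sigma})$ and the K\"ahler identity.
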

\begin{proof}
In this proof, we write $F_{\zeta,c}$ by $F$ for short. 
It is clear that we only have to prove in the case (I). 
First we prove that $F$ is an immersion map. 
Fix a point $x=(p,v)$ in $L_{\zeta,c}=M^{\sigma}_{\zeta,c}\times U$. 
Then we have a decomposition 
$$T_{x}L_{\zeta,c}=T_{p}M^{\sigma}_{\zeta,c}\oplus T_{v}U, $$
and note that $T_{v}U\cong V_{\zeta}$. 
Take tangent vectors $X,X_{1},X_{2}$ in $T_{p}M^{\sigma}_{\zeta,c}$. We have
$$F_{*}X=t_{v*}X, $$
where we put $t_{v}:=\exp(v)$ for short and 
we identify an element $t_{v}$ in $T^m$ with a left transition map $t_{v}:M\rightarrow M$. 
Take vectors $Y,Y_{1},Y_{2}$ in $T_{v}U\cong V_{\zeta}$. We have
$$F_{*}Y=t_{v*}Y_{p}. $$
Note that $\sigma_{*}X=X$ since $X$ is tangent to the real form 
and $\sigma_{*}Y_{p}=-Y_{p}$ since $\sigma(u \cdot p)=u^{-1}\cdot p$ for all $u$ in $T^m$. 
Thus we have 
$$g(F_{*}X,F_{*}Y)=g(X,Y_{p})=(\sigma^{*}g)(X,Y_{p})=-g(X,Y_{p}), $$
and this means that $g(F_{*}X,F_{*}Y)=0$ and 
$F_{*}(T_{p}M^{\sigma}_{\zeta,c})$ and $F_{*}(T_{v}U)$ are orthogonal to each other. 
It is clear that $F_{*}$ restricted on $T_{p}M^{\sigma}_{\zeta,c}$ is injective 
and $F_{*}$ restricted on $T_{v}U$ is also injective. 
Thus this means that $F_{*}$ is injective on $T_{x}L_{\zeta,c}$ and $F$ is an immersion map. 

Next we prove that $F$ is a Lagrangian, that is $F^{*}\omega=0$. 
It is easy to see $(F^{*}\omega)(X_{1},X_{2})=0$ and $(F^{*}\omega)(Y_{1},Y_{2})=0$. 
We can also prove that $(F^{*}\omega)(X,Y)=0$. 
Actually we have
\begin{align*}
(F^{*}\omega)(X,Y)=\omega(X,Y_{p})=X(\langle \mu,Y\rangle )=0,
\end{align*}
since we can write $Y=a^{1}\zeta_{1}+\cdots+a^{n}\zeta_{n}$ for some coefficients $a^{k}$ 
then $\langle \mu,Y\rangle$ is a constant $a^{1}c_{1}+\dots+a^{n}c_{n}$ on $M^{\sigma}_{\zeta,c}$. 
\end{proof}
\section{Lagrangian angle}\label{angle}
In above sections, the ambient space $(M,\omega,g,J)$ is a toric K\"ahler manifold. 
From this section, we assume that the canonical line bundle $K_{M}$ of $(M,J)$ is trivial. 
This condition is equivalent to that there exists a vector $\gamma$ in $\mathbb{Z}_{g}^{*}$ such that 
$\langle \gamma, \lambda_{i} \rangle=1$ for all $i=1,\dots,d$, 
where $\lambda_{i}$ is a primitive generator of a 1-dimensional cone of fan $\Sigma$ of $M$, see Section \ref{TKm}. 
In fact, if such a vector $\gamma=(\gamma_{1},\dots,\gamma_{m})$ exists, a holomorphic $(m,0)$-form 
\begin{align}\label{Omega}
\Omega_{\gamma}:=e^{\gamma_{1}z^{1}+\dots+\gamma_{m}z^{m}}dz^{1}\wedge\dots\wedge dz^{m}
\end{align}
written by logarithmic holomorphic coordinates on an open dense $(\mathbb{C}^{*})^m$-orbit 
can be extend over $M$ as a nowhere vanishing holomorphic $(m,0)$-form. 
We call this $(M, \omega, g, J, \Omega_{\gamma})$ a toric almost Calabi--Yau manifold. 

In general an $m$-dimensional K\"ahler manifold $(M, \omega, g, J)$ with nowhere vanishing holomorphic $(m,0)$-form $\Omega$ 
is called an almost Calabi--Yau manifold, and for a Lagrangian immersion $F:L\rightarrow M$ 
we can define the Lagrangian angle $\theta_{F}:L\rightarrow \mathbb{R}/\pi\mathbb{Z}$ as follows. 
For $x$ in $L$, take a local chart $(U, (x^{1},\dots,x^{m}))$ around $x$, then $F^{*}\Omega$ is a 
$\mathbb{C}^{*}$-valued $m$-form on $U$, so there exists a $\mathbb{C}^{*}$-valued function $h_{U}$ on $U$ such that 
$$F^{*}\Omega=h_{U}(x^{1},\dots,x^{m})dx^{1}\wedge\dots\wedge dx^{m}$$
on $U$, and we define the Lagrangian angle $\theta_{F}:L\rightarrow \mathbb{R}/\pi\mathbb{Z}$ by 
$$\theta_{F}(x):=\arg(h_{U}(x)) \mod{\pi}.$$ 
This definition is independent of the choice of local charts. 
It is clear that if $L$ is oriented we can lift $\theta_{F}$ to a $\mathbb{R}/2\pi\mathbb{Z}$-valued function $\theta_{F}:L\rightarrow \mathbb{R}/2\pi\mathbb{Z}$. 
If we can lift $\theta_{F}$ to a $\mathbb{R}$-valued function $\theta_{F}:L\rightarrow \mathbb{R}$ then $F:L\rightarrow M$ is called Maslov zero, 
and furthermore if $\theta_{F}$ is constant $\theta_{0}$ then $F:L\rightarrow M$ is called a special Lagrangian submanifold with phase $e^{i\theta_{0}}$. 

In \cite{Behrndt}, Behrndt introduced the notion of the generalized mean curvature vector field $K$ 
for a Lagrangian immersion $F:L\rightarrow M$ in an almost Calabi--Yau manifold. 
The generalized mean curvature vector field $K$ is defined by 
\begin{align}\label{defK}
K:=H-m\nabla \psi^{\bot}
\end{align}
where $H$ is the mean curvature vector field of the immersion $F:L\rightarrow (M,g)$, $\psi$ is a function on $M$ defined by the following equation; 
\begin{align}\label{psi}
e^{2m\psi}\frac{\omega^m}{m!}=(-1)^{\frac{m(m-1)}{2}}\biggl(\frac{i}{2}\biggr)^m\Omega\wedge\overline{\Omega}, 
\end{align}
and $\nabla \psi^{\bot}$ is the normal part of the gradient of $\psi$. 
By the definition of $K$, if $M$ is a Calabi--Yau manifold, that is $\psi\equiv 0$, then the generalized mean curvature vector field $K$ 
coincides with the mean curvature vector field $H$. 
In Proposition 4.8 in \cite{Behrndt2}, Behrndt proved the relation between $K$ and $\theta_{F}$ which is written by 
\begin{align}\label{K}
K=J\nabla\theta_{F}. 
\end{align}
Thus $K\equiv 0$ is equivalent to that $L$ is a special Lagrangian submanifold. 

Furthermore, in this paper, we introduce the notion of \textit{weighted Hamiltonian stationary} for a Lagrangian immersion $F:L\rightarrow M$ into 
an almost Calabi--Yau manifold  $(M, \omega, g, J, \Omega)$ with $\psi$ defined by (\ref{psi}).
\begin{definition}
Let $\theta_{F}$ be the Lagrangian angle of $F:L\rightarrow M$. 
If $\Delta_{f}\theta_{F}=0$ then we call $F:L\rightarrow M$ a weighted Hamiltonian stationary Lagrangian submanifold. 
\end{definition}
Here $f$ is a function on $L$ defined by $f:=-mF^{*}\psi$ and 
$\Delta_{f}$ is the weighted Laplacian on Riemannian manifold $(L,F^{*}g)$. 
In general, for a Riemannian manifold $(N, h)$ with a function $f$, the weighted Laplacian with respect to $f$ is defined by
$\Delta_{f}u:=\Delta u +\langle\nabla u,\nabla f \rangle$. 
Thus if $M$ is a Calabi--Yau manifold, that is $\psi=0$, then the notion of weighted Hamiltonian stationary is equivalent to the Hamiltonian stationary condition i.e. $\Delta\theta_{F}=0$. 
For the meaning of the weighted Hamiltonian stationary condition, See Appendix \ref{app}. 
Note that $\Delta_{f}$ is the standard Laplace operator on $L$ with respect to a Riemannian metric $F^{*}(e^{2\psi}g)$. 

In this section, we compute the Lagrangian angle of the concrete example $F_{\zeta,c}:L_{\zeta,c}\rightarrow M$ constructed in Section \ref{Lag}, 
and show some properties of $F_{\zeta,c}:L_{\zeta,c}\rightarrow M$. 

Let $(M, \omega, g, J, \Omega_{\gamma})$ be an $m$-dimensional  toric almost Calabi--Yau manifold and 
$F_{\zeta,c}:L_{\zeta,c}\rightarrow M$ be a Lagrangian immersion constructed by $\zeta=\{\zeta_{1},\dots,\zeta_{n}\}\subset\mathfrak{g}$ 
and $c=\{c_{1},\dots,c_{n}\}\subset\mathbb{R}$ explained in Section \ref{Lag}. 
\begin{theorem}\label{lagang}
Let $\theta$ be the Lagrangian angle of $F_{\zeta,c}:L_{\zeta,c}\rightarrow M$ then we have
$$\theta(x)=2\pi\langle\gamma,v\rangle + \frac{\pi}{2}n \mod{\pi}$$
for $x=(p,v)$ in $L_{\zeta,c}=M^{\sigma}_{\zeta,c}\times U$ in the case {\rm(I)} and for $x=(p,[v])$ in $L_{\zeta,c}=M^{\sigma}_{\zeta,c}\times T_{\zeta}$ in the case {\rm(II)}. 
\end{theorem}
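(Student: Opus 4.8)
The plan is to compute $F_{\zeta,c}^{*}\Omega_{\gamma}$ directly on the open dense $T^{m}_{\mathbb{C}}$-orbit $U\cong(\mathbb{C}^{\times})^{m}$, where $\Omega_{\gamma}$ is given by the explicit formula (\ref{Omega}) in logarithmic holomorphic coordinates $z^{1},\dots,z^{m}$. Since $F_{\zeta,c}^{-1}(U)$ is open and dense in $L_{\zeta,c}$ (it contains the preimage of the open orbit, which is nonempty because $\Delta_{\zeta,c}$ meets the interior of $\Delta$) and the Lagrangian angle is continuous, it suffices to establish the formula on $F_{\zeta,c}^{-1}(U)$.

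First I would express the two factors of $L_{\zeta,c}$ in these coordinates. As noted in Section \ref{TKm}, on $U$ the involution $\sigma$ is the standard conjugation $z\mapsto\overline{z}$, so on the identity component of $M^{\sigma}\cap U$ each $z^{j}$ is real; write $x^{j}$ for the restriction of $z^{j}$ to $M^{\sigma}_{\zeta,c}$, a real-valued function. (On the remaining components of $M^{\sigma}$ one has $z^{j}=x^{j}+\sqrt{-1}\pi\varepsilon_{j}$ with $\varepsilon_{j}\in\{0,1\}$ locally constant, which will only change the final answer by a phase in $\pi\mathbb{Z}$, invisible modulo $\pi$.) The $T^{m}$-action reads $\exp(v)\cdot z=z+2\pi\sqrt{-1}\,v$ componentwise, so on $F_{\zeta,c}^{-1}(U)$ we get $F_{\zeta,c}^{*}z^{j}=x^{j}+2\pi\sqrt{-1}\,v_{j}$, where $v_{j}$ is the $j$-th component of $v\in V_{\zeta}\subset\mathfrak{g}\cong\mathbb{R}^{m}$, a real-valued function on the $U$- (resp.\ $T_{\zeta}$-) factor; hence $F_{\zeta,c}^{*}dz^{j}=dx^{j}+2\pi\sqrt{-1}\,dv_{j}$.

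Next I would pull back $\Omega_{\gamma}=e^{\sum_{j}\gamma_{j}z^{j}}\,dz^{1}\wedge\cdots\wedge dz^{m}$. The exponential factor becomes $e^{\langle\gamma,x\rangle}e^{2\pi\sqrt{-1}\langle\gamma,v\rangle}$, nowhere zero, with argument $2\pi\langle\gamma,v\rangle$. Expanding
$$\bigwedge_{j=1}^{m}\bigl(dx^{j}+2\pi\sqrt{-1}\,dv_{j}\bigr)=\sum_{S\subseteq\{1,\dots,m\}}\pm(2\pi\sqrt{-1})^{|S|}\Bigl(\bigwedge_{j\notin S}dx^{j}\Bigr)\wedge\Bigl(\bigwedge_{j\in S}dv_{j}\Bigr),$$
I use that $dx^{1},\dots,dx^{m}$ are pulled back from the $(m-n)$-dimensional manifold $M^{\sigma}_{\zeta,c}$, while $dv_{1},\dots,dv_{m}$ are $\mathbb{R}$-linear combinations of $ds^{1},\dots,ds^{n}$ for a basis $v=\sum_{k}s^{k}\zeta_{k}$ of $V_{\zeta}$: a term with $|S|=\ell$ vanishes unless $m-\ell\le m-n$ and $\ell\le n$, i.e.\ unless $\ell=n$. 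Hence $F_{\zeta,c}^{*}(dz^{1}\wedge\cdots\wedge dz^{m})=(2\pi\sqrt{-1})^{n}\,\rho$ with $\rho:=\sum_{|S|=n}\pm(\bigwedge_{j\notin S}dx^{j})\wedge(\bigwedge_{j\in S}dv_{j})$ a real-valued $m$-form (a wedge of differentials of real functions), so altogether $F_{\zeta,c}^{*}\Omega_{\gamma}=e^{\langle\gamma,x\rangle}(2\pi)^{n}(\sqrt{-1})^{n}e^{2\pi\sqrt{-1}\langle\gamma,v\rangle}\,\rho$.

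To conclude, since $F_{\zeta,c}$ is an immersion (Theorem \ref{lag}) the form $F_{\zeta,c}^{*}\Omega_{\gamma}$ is nowhere zero; as all the other factors are nowhere zero, $\rho$ is a nowhere-vanishing real $m$-form. Therefore, in any local chart the coefficient $h_{U}$ of $F_{\zeta,c}^{*}\Omega_{\gamma}$ differs from $(\sqrt{-1})^{n}e^{2\pi\sqrt{-1}\langle\gamma,v\rangle}$ by a nowhere-zero real-valued function, so $\arg h_{U}\equiv 2\pi\langle\gamma,v\rangle+\tfrac{\pi}{2}n\pmod{\pi}$ using $(\sqrt{-1})^{n}=e^{\sqrt{-1}\pi n/2}$; this is the asserted formula in case (I). In case (II) the right-hand side descends to $T_{\zeta}=V_{\zeta}/(V_{\zeta}\cap\mathbb{Z}_{\mathfrak{g}})$ because $\gamma\in\mathbb{Z}_{\mathfrak{g}}^{*}$ forces $2\pi\langle\gamma,w\rangle\in 2\pi\mathbb{Z}$ for $w\in V_{\zeta}\cap\mathbb{Z}_{\mathfrak{g}}$, so $2\pi\langle\gamma,v\rangle$ is well defined modulo $\pi$; and moving to a non-identity component of $M^{\sigma}$ alters $h_{U}$ only by a phase $e^{\sqrt{-1}\pi\langle\gamma,\varepsilon\rangle}=\pm1$, invisible modulo $\pi$. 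The only genuine content is the dimension count singling out the terms with exactly $n$ of the $dv_{j}$'s, together with the remark that the surviving real $m$-form $\rho$ is nonvanishing — immediate from $F_{\zeta,c}$ being an immersion; everything else is bookkeeping of phases modulo $\pi$.
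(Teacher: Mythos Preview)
Your proof is correct and follows essentially the same route as the paper: work in logarithmic holomorphic coordinates on the open dense $(\mathbb{C}^{\times})^{m}$-orbit, pull back $\Omega_{\gamma}$, and use the dimension count ($M^{\sigma}_{\zeta,c}$ has dimension $m-n$, the torus factor has dimension $n$) to see that only the terms with exactly $n$ of the $dv_{j}$'s survive, contributing the phase $(2\pi i)^{n}$ on top of $e^{2\pi i\langle\gamma,v\rangle}$. The paper organizes this via an auxiliary map $\tilde{F}:M^{\sigma}\times\mathfrak{g}\to M$ and then restricts along the inclusion $\iota$, but this is purely cosmetic; your additional remarks on the non-identity components of $M^{\sigma}$, on the nonvanishing of $\rho$ via Theorem~\ref{lag}, and on well-definedness modulo $\pi$ in case~(II) are correct refinements that the paper leaves implicit.
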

\begin{proof}
In this proof, we write $F_{\zeta,c}$ by $F$ for short. 
It is clear that we only have to prove in the case (I). 
Let $M^{\sigma}$ be a real form of $M$ and $\mathfrak{g}$ be a Lie algebra of $T^m$. 
We define a map $\tilde{F}:M^{\sigma}\times\mathfrak{g} \rightarrow M$ by 
$$\tilde{F}(p,v):=\exp(v)\cdot p. $$
Remember that $L_{\zeta,c}=M^{\sigma}_{\zeta,c}\times U$, and $M^{\sigma}_{\zeta,c}$ is an $(m-n)$-dimensional submanifold in $M^{\sigma}$ and $U$ is an $n$-dimensional submanifold in $\mathfrak{g}$. 
Thus we write the inclusion map $L_{\zeta,c}$ into $M^{\sigma}\times\mathfrak{g}$ by 
$$\iota=(\iota_{1},\iota_{2}):L_{\zeta,c}=M^{\sigma}_{\zeta,c}\times U \hookrightarrow M^{\sigma}\times \mathfrak{g}. $$
Then the map $F:L_{\zeta,c}\rightarrow M$ coincides with $\tilde{F}\circ\iota$ by the definition of $F$, 
so we compute $\iota^{*}(\tilde{F}^{*}\Omega_{\gamma})$ to compute $F^{*}\Omega_{\gamma}$. 
It is enough to prove this theorem on an open dense $(\mathbb{C}^{*})^{m}$-orbit,  
so we take a logarithmic holomorphic coordinates $(z^{1},\dots,z^{m})$, 
then $(x^{1},\dots,x^{m})$ define local coordinates on the real form $M^{\sigma}$, where $z^{j}=x^{j}+iy^{j}$. 
Let $(t^{1},\dots,t^{m})$ be coordinates of $\mathfrak{g}\cong\mathbb{R}^{m}$ 
then we have a local expression of a map $\tilde{F}:M^{\sigma}\times\mathfrak{g} \rightarrow M$ by 
$$\tilde{F}(x^{1},\dots,x^{m},t^{1},\dots,t^{m})=(x^{1}+2\pi i t^{1},\dots,x^{m}+2\pi i t^{m}). $$
Since $\Omega_{\gamma}=e^{\gamma_{1}z^{1}+\dots+\gamma_{m}z^{m}}dz^{1}\wedge\dots\wedge dz^{m}$, we have
$$\tilde{F}^{*}\Omega_{\gamma}=e^{(\gamma_{1}x^{1}+\dots+\gamma_{m}x^{m})+2\pi i(\gamma_{1}t^{1}+\dots+\gamma_{m}t^{m})}(dx^{1}+2\pi i dt^{1})\wedge\dots\wedge(dx^{m}+2\pi i dt^{m}).$$
Since $L_{\zeta,c}=M^{\sigma}_{\zeta,c}\times U$, and $M^{\sigma}_{\zeta,c}$ is an $(m-n)$-dimensional submanifold in $M^{\sigma}$ and $U$ is an $n$-dimensional submanifold in $\mathfrak{g}$, 
in the expansion of $(dx^{1}+2\pi i dt^{1})\wedge\dots\wedge(dx^{m}+2\pi i dt^{m})$, only differential forms such as 
$$(2\pi i)^{n} dx^{I}\wedge dt^{J}$$
with $\sharp I =m-n$ and $\sharp J= n$ do not vanish after pull-back by $\iota$. 
Thus the argument of $F^{*}\Omega_{\gamma}=\iota^{*}(\tilde{F}^{*}\Omega_{\gamma})$ is the argument of 
$$(2\pi i)^{n}e^{\langle\gamma,p\rangle+2\pi i\langle\gamma,v\rangle}, $$
that is $2\pi\langle\gamma,v\rangle + \frac{\pi}{2}n \mod{\pi}$. 
\end{proof}
Then the following corollary is clear. 
\begin{corollary}
$F_{\zeta,c}:L_{\zeta,c}\rightarrow M$ is a special Lagrangian submanifold if and only if 
$\langle\gamma,\zeta_{i}\rangle=0$ for all $i=1,\dots,n$. 
\end{corollary}
It is clear that the real form $M^{\sigma}$, that is the case of $n=0$, is always a special Lagrangian submanifold and 
every torus fiber, that is the case of $n=m$, is not a special Lagrangian submanifold. 
\begin{theorem}
$F_{\zeta,c}:L_{\zeta,c}\rightarrow M$ is weighted Hamiltonian stationary. 
\end{theorem}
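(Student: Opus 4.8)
The plan is to reduce the identity $\Delta_{f}\theta = 0$ (where $\theta = \theta_{\zeta,c}$ is the Lagrangian angle and $f = -mF^{*}\psi$) to two essentially trivial observations once the geometry of $F_{\zeta,c}$ is unwound. As in the proofs of Theorems \ref{lag} and \ref{lagang} it suffices to treat case (I), so $L_{\zeta,c} = M^{\sigma}_{\zeta,c}\times U$ with $U\subset V_{\zeta}$ a small ball; I fix coordinates $(u^{1},\dots,u^{m-n})$ on $M^{\sigma}_{\zeta,c}$ and linear coordinates $(s^{1},\dots,s^{n})$ on $V_{\zeta}$ coming from a basis $b_{1},\dots,b_{n}$ of $V_{\zeta}$, so that $v = \sum_{k}s^{k}b_{k}$. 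By Theorem \ref{lagang}, $\theta(p,v) = 2\pi\langle\gamma,v\rangle + \frac{\pi}{2}n = 2\pi\sum_{k}\langle\gamma,b_{k}\rangle s^{k} + \frac{\pi}{2}n$; in particular $\theta$ is independent of the $u^{a}$ and affine-linear in the $s^{k}$ with constant coefficients.

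Next I would record the structure of $F^{*}g$. From the proof of Theorem \ref{lag}, $F_{*}\partial_{u^{a}} = t_{v*}\partial_{u^{a}}$ and $F_{*}\partial_{s^{k}} = t_{v*}(b_{k})_{p}$, where $t_{v} = \exp(v)\in T^{m}$ and $(b_{k})_{p}$ is the fundamental vector field; since $g$ is $T^{m}$-invariant, $t_{v}$ is an isometry, so the matrix entries $g(F_{*}\partial_{u^{a}},F_{*}\partial_{u^{b}})$ and $g(F_{*}\partial_{s^{j}},F_{*}\partial_{s^{k}})$ are independent of $(s^{k})$, while the mixed entries $g(F_{*}\partial_{u^{a}},F_{*}\partial_{s^{k}})$ vanish by the orthogonality already proved there. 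Thus $F^{*}g$ is block diagonal in the splitting $(u,s)$ with all entries functions of $(u^{a})$ alone. I also need that $f$ depends only on $(u^{a})$: in the logarithmic holomorphic coordinates $z^{j} = x^{j} + \sqrt{-1}y^{j}$ of Section \ref{angle}, the $T^{m}$-action shifts the $y^{j}$, so $\Omega_{\gamma}$ is multiplied by the unimodular factor $e^{2\pi\sqrt{-1}\langle\gamma,\cdot\rangle}$; hence $\Omega_{\gamma}\wedge\overline{\Omega_{\gamma}}$ is $T^{m}$-invariant, and then (\ref{psi}) together with the $T^{m}$-invariance of $\omega^{m}$ forces $\psi$ to be $T^{m}$-invariant, so $f(p,v) = -m\psi(\exp(v)\cdot p) = -m\psi(p)$ is a function of $(u^{a})$ only.

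With these facts the computation is immediate. The gradient $\nabla\theta$ with respect to $F^{*}g$ lies in the $s$-directions and $\nabla f$ in the $u$-directions, and the two are $F^{*}g$-orthogonal, so $\langle\nabla\theta,\nabla f\rangle = 0$ and $\Delta_{f}\theta = \Delta_{F^{*}g}\theta$. For the remaining term write $\Delta_{F^{*}g}\theta = \frac{1}{\sqrt{|G|}}\,\partial_{A}\big(\sqrt{|G|}\,G^{AB}\partial_{B}\theta\big)$ with $G = F^{*}g$; the only nonzero $\partial_{B}\theta$ are the constants $\partial_{s^{k}}\theta$, and contracting with the block-diagonal $G^{AB}$ leaves only the entries $\sqrt{|G|}\,G^{s^{j}s^{k}}\partial_{s^{k}}\theta$, which are functions of $(u^{a})$ alone; applying $\partial_{s^{j}}$ annihilates them, so $\Delta_{F^{*}g}\theta = 0$. (Alternatively, one may argue directly with the conformal metric $F^{*}(e^{2\psi}g)$, whose Laplacian is $\Delta_{f}$: since $e^{2\psi}$ is also a function of $(u^{a})$ only, this metric has exactly the same block structure and the same computation applies.) Hence $F_{\zeta,c}:L_{\zeta,c}\rightarrow M$ is weighted Hamiltonian stationary. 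The one step deserving care is the $T^{m}$-invariance of $\psi$ — hence of $f$ and of the conformal factor — along the orbits through $M^{\sigma}_{\zeta,c}$; every other ingredient is a direct consequence of the product description of $L_{\zeta,c}$, the $T^{m}$-invariance of $g$, and the explicit affine form of $\theta$, together with the orthogonality from Theorem \ref{lag}.
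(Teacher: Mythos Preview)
Your proof is correct and follows essentially the same route as the paper: reduce to case (I), use the $T^{m}$-invariance of $g$ and of $\psi$ (the latter read off from (\ref{Omega}) and (\ref{psi})) together with the orthogonality established in Theorem~\ref{lag} to conclude that the pulled-back metric is block diagonal with all coefficients independent of the $U$-factor, and then use the affine form of $\theta$ from Theorem~\ref{lagang}. The only difference is presentational: the paper phrases the endgame in terms of the conformal metric $F^{*}(e^{2\psi}g)$ and simply asserts that ``one can easily prove $\Delta_{f}\theta=0$'', whereas you carry out the coordinate computation explicitly, splitting $\Delta_{f}\theta$ into $\Delta_{F^{*}g}\theta$ and $\langle\nabla\theta,\nabla f\rangle$ and showing each vanishes separately.
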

\begin{proof}
In this proof, we write $F_{\zeta,c}$ by $F$ for short. 
We only have to prove that $\Delta_{f}\theta=0$ in the case (I) that $L_{\zeta,c}=M^{\sigma}_{\zeta,c}\times U$. 
As noted above, $\Delta_{f}$ is the standard Laplace operator on $L$ with respect to a Riemannian metric $F^{*}(e^{2\psi}g)$. 
Since $g$ is invariant under the torus action and it is easily seen that $\psi$ is also torus invariant by the equation (\ref{Omega}) and (\ref{psi}), 
so the metric $e^{2\psi}g$ is also a torus invariant metric on $M$. 
Since $F:L_{\zeta,c}\rightarrow M$ is given by $F(p,v):=\exp(v)\cdot p$ and $e^{2\psi}g$ is a torus invariant metric on $M$, 
the metric $F^{*}(e^{2\psi}g)$ on $L$ is independent of the $U$-factor of $L_{\zeta,c}$. 
Furthermore in the proof of Theorem \ref{lag} we prove that $F_{*}(TM^{\sigma}_{\zeta,c})$ and $F_{*}(TU)$ are orthogonal, 
thus $F^{*}(e^{2\psi}g)$ is a product metric over $M^{\sigma}_{\zeta,c}$ and $U$ locally. 
By Theorem \ref{lagang}, the Lagrangian angle is given by $\theta(p,v)=2\pi\langle\gamma,v\rangle + \frac{\pi}{2}n$, 
it is independent of $M^{\sigma}_{\zeta,c}$-factor of $L_{\zeta,c}$ and affine on $U$-factor. 
Then one can easily prove that $\Delta_{f}\theta=0$. 
\end{proof}
\section{Mean curvature flow}\label{MCF}
In this section, we consider generalized Lagrangian mean curvature flows. 
In general, a generalized Lagrangian mean curvature flow is defined in an almost Calabi--Yau manifold $(M,\omega, g, J, \Omega)$. 
Let $F_{0}:L\rightarrow M$ be a Lagrangian immersion, 
then a one parameter family of Lagrangian submanifolds $F:L\times I \rightarrow M$ is called a solution of a generalized Lagrangian mean curvature flow 
with initial condition $F_{0}$ if it moves along its generalized Lagrangian mean curvature vector field $K$ defined in (\ref{defK}), that is, 
\begin{align}\label{gmcf}
\biggl(\frac{\partial F}{\partial t}\bigg)^{\bot}=K_{t}\hspace{5mm}and\hspace{5mm}F(\cdot,0)=F_{0},
\end{align}
where $K_{t}$ is the generalized Lagrangian mean curvature vector field of immersion $F_{t}:L\rightarrow M$ defined by $F_{t}(p):=F(p,t)$. 
Of course, if $M$ is a Calabi--Yau manifold then a generalized Lagrangian mean curvature flow is an ordinary Lagrangian mean curvature flow. 
It is clear that on a special Lagrangian submanifold $K$=0 by the equation (\ref{K}) 
thus a special Lagrangian submanifold is a stationary solution of a generalized Lagrangian mean curvature flow. 
In general a generalized Lagrangian mean curvature flow develops some singularities in a finite time, so here we define a notion of 
a generalized Lagrangian mean curvature flow with some singularities and topological changes. 
\begin{definition}
Let $(M,\omega, g, J, \Omega)$ be a real $2m$-dimensional almost Calabi--Yau manifold and $\{L_{t}\}_{t\in I}$ be a one parameter family of subsets in $M$. 
Then we call $\{L_{t}\}_{t\in I}$ a solution of a generalized Lagrangian mean curvature flow with singularities and topological changes if 
there exists a real $m$-dimensional manifold $L$ and a solution of a generalized Lagrangian mean curvature flow $F:L\times I\rightarrow M$ such that 
$F_{t}:L\rightarrow M$ is an embedding into $L_{t}$ and $m$-dimensional Hausdorff measure of $L_{t}\setminus F_{t}(L)$ is zero, i.e.  
\begin{align}\label{gmcf2}
F_{t}(L)\subset L_{t}\hspace{5mm}and\hspace{5mm}\mathcal{H}^{m}(L_{t}\setminus F_{t}(L))=0. 
\end{align}
\end{definition}
It means that $\{L_{t}\}_{t\in I}$ is almost parametrized by a smooth solution of a generalized Lagrangian mean curvature flow. 

The purpose of this section is to observe how our concrete examples $F_{\zeta,c}:L_{\zeta,c}\rightarrow M$ move along the generalized Lagrangian mean curvature flow. 
Let $(M,\omega, g, J, \Omega_{\gamma})$ be a toric almost Calabi--Yau manifold and 
$F_{\zeta,c}:L_{\zeta,c}\rightarrow M$ be a Lagrangian submanifold constructed in Section \ref{Lag} by data $\zeta=\{\zeta_{1},\dots,\zeta_{n}\}\subset\mathfrak{g}$ and $c=\{c_{1},\dots,c_{n}\}\subset\mathbb{R}$. 
Let $$c_{i}(t):=c_{i}-2\pi\langle\gamma,\zeta_{i}\rangle t$$ for $t\in\mathbb{R}$ and we denote $c(t):=\{c_{1}(t),\dots,c_{n}(t)\}$. 
We define an open interval $I$ by 
$$I:=\biggl\{\,t\in\mathbb{R}\,\bigg|\,\mathrm{Int}\Delta\cap\biggl( \bigcap_{i=1}^{n}H_{ \zeta_{i}, c_{i}(t) } \biggr)\neq\emptyset\,\biggr\}, $$
by the assumption of $\zeta$ and $c$ we have $0\in I$. 
\begin{theorem}
A one parameter family of subsets $\{L'_{\zeta,c(t)}\}_{t\in I}$ defined by (\ref{L1}) in the case (I) or by (\ref{L2}) in the case (II) is 
a solution of a generalized Lagrangian mean curvature flow with singularities and topological changes. 
\end{theorem}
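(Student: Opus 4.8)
The plan is to exhibit an explicit smooth generalized Lagrangian mean curvature flow $F:L\times I\to M$ whose image at each time is $L'_{\zeta,c(t)}$ up to a set of $m$-dimensional Hausdorff measure zero, thereby verifying the two conditions in \eqref{gmcf2}. First I would handle the smooth part: on any compact time subinterval $[t_0,t_1]\subset I$ one can choose $c(t_0)$-data so that $\Delta_{\zeta,c(t)}\subset\Delta_{\zeta reg}$ for all $t\in[t_0,t_1]$, and then $M^{\sigma}_{\zeta,c(t)}$ is a smooth $(m-n)$-manifold and $F_{\zeta,c(t)}:L_{\zeta,c(t)}\to M$ is a Lagrangian immersion by Theorem~\ref{lag}. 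The subtlety is that the underlying manifold $L_{\zeta,c(t)}=M^{\sigma}_{\zeta,c(t)}\times U$ (or $\times T_\zeta$) changes with $t$; to put everything on a fixed $L$ I would use that $M^{\sigma}_{\zeta,c(t)}$ for varying $t$ sweeps out a submanifold of $M^\sigma$ diffeomorphic to $M^{\sigma}_{\zeta,c(t_0)}\times(t_0,t_1)$ via the gradient flow of the functions $f_i=\langle\mu,\zeta_i\rangle$, so one can reparametrize in the $M^\sigma$-directions to get a single model $L=M^{\sigma}_{\zeta,c(t_0)}\times U$ (resp. $\times T_\zeta$) with a time-dependent map $F_t$ into $M$; on the torus/$U$-directions nothing moves.

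Next I would check that this family actually solves \eqref{gmcf} for the right normal speed. By Theorem~\ref{lagang} the Lagrangian angle of $F_{\zeta,c(t)}$ is $\theta_{\zeta,c(t)}(p,[v])=2\pi\langle\gamma,v\rangle+\tfrac{\pi}{2}n$, which is in fact \emph{independent of $t$} as a function on the fixed model $L$ once we parametrize the $v$-directions by $T_\zeta$ (or $U$); so $K_t=J\nabla\theta_{\zeta,c(t)}$ by \eqref{K}, and $K_t$ points in the $M^\sigma$-directions. The content is then to show that the chosen deformation $c_i(t)=c_i-2\pi t\langle\gamma,\zeta_i\rangle$ is exactly the one whose induced normal velocity equals $K_t$. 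Here I would use the moment-map identity $df_i=-\omega(\zeta_{i,\cdot})$ together with $\tfrac{d}{dt}c_i(t)=-2\pi\langle\gamma,\zeta_i\rangle$: the level set $\{f_i=c_i(t)\}$ moves with normal velocity proportional to $\nabla f_i/|\nabla f_i|^2$ times $\dot c_i(t)$, and one checks via $K_t=J\nabla\theta=J\nabla(2\pi\langle\gamma,v\rangle)$ — rewriting $\langle\gamma,v\rangle$ in terms of the $\langle\mu,\zeta_i\rangle$ on the leaves — that the two expressions agree. This is a direct computation using the product structure of $F^*(e^{2\psi}g)$ established in the proof that $F_{\zeta,c}$ is weighted Hamiltonian stationary, and the fact that $\psi$ and $g$ are torus-invariant.

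The remaining, and main, obstacle is the behavior at $\zeta$-singular values, i.e. the singularities and topological changes. For $t\in I$ the affine slice meets $\mathrm{Int}\,\Delta$, but $\Delta_{\zeta,c(t)}$ may hit faces $F\in\mathcal G$ or $\zeta$-singular points; at such times $M^{\sigma}_{\zeta,c(t)}$ may fail to be a manifold and the topology of $L'_{\zeta,c(t)}$ jumps. The resolution is that the set of such bad times, and the bad locus within each slice, is lower-dimensional: $\Delta_{\zeta sing}$ is contained in a finite union of lower-dimensional faces, so $\{t:\Delta_{\zeta,c(t)}\cap\Delta_{\zeta sing}\neq\emptyset\}$ and the corresponding subsets of $L'_{\zeta,c(t)}$ have $\mathcal H^m$-measure zero. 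I would therefore take $L$ to be the model manifold built over the open dense $\zeta$-regular part, let $I'\subset I$ be the full-measure set of regular times, run the smooth flow there, and observe that $F_t(L)\subset L'_{\zeta,c(t)}$ with $\mathcal H^m(L'_{\zeta,c(t)}\setminus F_t(L))=0$ for every $t\in I$ because the discarded $\zeta$-singular fibers, and the fibers over $\Delta\cap\partial H_{\lambda_i,\kappa_i}$, are swept out by at most $(m-1)$-dimensional families. Care is needed to arrange a \emph{single} smooth $F:L\times I\to M$ across regular times even when the slice crosses an interior $\zeta$-singular stratum (where $M^{\sigma}_{\zeta,c(t)}$ stays a manifold but $L_{\zeta,c(t)}$'s torus-bundle structure degenerates); there one uses the generic model $L=M^{\sigma}_{\zeta,c}\times U$ of case (I), which is immune to the special-condition failure, and only passes to the quotient $\tilde L_{\zeta,c}$ when describing the image set $L'_{\zeta,c(t)}$.
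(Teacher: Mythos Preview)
Your overall strategy---build a smooth flow on a fixed model manifold, verify the generalized mean curvature flow equation using the Lagrangian angle formula and the moment map, and observe that the complement in $L'_{\zeta,c(t)}$ has $\mathcal H^m$-measure zero---is the same as the paper's. But the paper makes two choices that are considerably cleaner than what you sketch, and one step in your outline is confused.

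First, rather than restricting to compact subintervals $[t_0,t_1]$ on which $\Delta_{\zeta,c(t)}\subset\Delta_{\zeta\mathrm{reg}}$ and then worrying about gluing across ``bad times'', the paper simply restricts to the \emph{interior} slice $\Delta''_{\zeta,c(t)}:=\mathrm{Int}\,\Delta\cap\bigcap_i H_{\zeta_i,c_i(t)}$ for every $t\in I$. This is nonempty by the definition of $I$, it is an open convex subset of an $(m-n)$-plane (hence diffeomorphic to $\mathbb{R}^{m-n}$), and the torus action is free over it, so the preimages $M''^{\sigma}_{\zeta,c(t)}$ are all diffeomorphic for $t\in I$. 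One chooses diffeomorphisms $G_t:M''^{\sigma}_{\zeta,c}\to M''^{\sigma}_{\zeta,c(t)}$ and sets $F_t(p,v)=\exp(v)\cdot G_t(p)$ on the single model $L=M''^{\sigma}_{\zeta,c}\times U$. There are no singular times to excise: the ``singularities and topological changes'' live only in the boundary contribution $L'_{\zeta,c(t)}\setminus F_t(L)$, which has $\mathcal H^m$-measure zero because it sits over $\partial\Delta$. Your discussion of $\zeta$-singular strata, full-measure sets $I'\subset I$, and gluing is unnecessary.

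Second, the verification of \eqref{gmcf} is done not by computing normal velocities directly but by checking the identity $\omega(\partial F/\partial t,F_{t*}Z)=\omega(K_t,F_{t*}Z)$ for every tangent $Z=X+Y$ with $X\in T_pM''^{\sigma}_{\zeta,c}$ and $Y\in V_\zeta$. For $X$ both sides vanish: $\omega(\partial G/\partial t,G_{t*}X)=0$ since the real form is Lagrangian, and $\omega(K_t,F_{t*}X)=-X\theta_{F_t}=0$ since $\theta$ is independent of $p$. For $Y$ one has $\omega(\partial G/\partial t,Y_{G_t(p)})=\partial_t\langle\mu\circ G_t,Y\rangle=\partial_t(\sum a^ic_i(t))=-2\pi\langle\gamma,Y\rangle$ by the moment-map identity and the definition of $c_i(t)$, while $\omega(K_t,F_{t*}Y)=-Y\theta_{F_t}=-2\pi\langle\gamma,Y\rangle$ by Theorem~\ref{lagang}. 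This bypasses your proposed step of ``rewriting $\langle\gamma,v\rangle$ in terms of the $\langle\mu,\zeta_i\rangle$ on the leaves'', which does not make sense as stated: $\langle\gamma,v\rangle$ is a function of the torus parameter $v$, while $\langle\mu,\zeta_i\rangle$ are functions on $M$ constant along torus orbits, so one cannot be rewritten in terms of the other. (Relatedly, your claim that ``$K_t$ points in the $M^\sigma$-directions'' is imprecise; what is true is that $\partial F/\partial t=\exp(v)_*(\partial G/\partial t)$ with $\partial G/\partial t$ tangent to $M^\sigma$, and only its normal part equals $K_t$.)
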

\begin{proof}
It is sufficient to prove this theorem in the case (I). 
First we define 
$$\Delta''_{\zeta,c(t)}:=\mathrm{Int}\Delta\cap\biggl( \bigcap_{i=1}^{n}H_{ \zeta_{i}, c_{i}(t) }\biggr). $$
This is an $(m-n)$-dimensional affine submanifold in $\Delta$. 
In fact, all $\Delta''_{\zeta,c(t)}$ are diffeomorphic to each other, and 
$(m-n)$-dimensional Hausdorff measure of $\Delta_{\zeta,c(t)}\setminus\Delta''_{\zeta,c(t)}$ is zero, 
since it is contained in $\partial\Delta$. 
Next we define 
$$M''^{\sigma}_{\zeta,c(t)}:=(\mu^{\sigma})^{-1}(\Delta''_{\zeta,c(t)})\hspace{5mm}and\hspace{5mm}
L''_{\zeta,c(t)}:=M''^{\sigma}_{\zeta,c(t)} \times U.$$
Then $M''^{\sigma}_{\zeta,c(t)}$ is an $(m-n)$-dimensional submanifold in $M$ and $L''_{\zeta,c(t)}$ is an $m$-dimensional manifold contained in $L_{\zeta,c(t)}$. 
As same as $\Delta''_{\zeta,c(t)}$, all $M''^{\sigma}_{\zeta,c(t)}$ are diffeomorphic to each other, and 
$(m-n)$-dimensional Hausdorff measure of $M^{\sigma}_{\zeta,c(t)}\setminus M''^{\sigma}_{\zeta,c(t)}$ is zero and 
$m$-dimensional Hausdorff measure of $L_{\zeta,c(t)}\setminus L''_{\zeta,c(t)}$ is also zero. 
Thus we can take a one parameter family of diffeomorphisms 
$$G_{t}:M''^{\sigma}_{\zeta,c}\rightarrow M''^{\sigma}_{\zeta,c(t)}, $$
for all $t\in I$ and $G_{t}$ induces a one parameter family of diffeomorphisms 
$$\tilde{G}_{t}:L''_{\zeta,c}\rightarrow L''_{\zeta,c(t)}$$
by $\tilde{G}_{t}(p,v):=(G_{t}(p),v)$. 
Then we have a one parameter family of maps $F:L''_{\zeta,c}\times I\rightarrow M$ by 
$$F_{t}(p,v):=F_{\zeta,c(t)}\circ \tilde{G}_{t}(p,v)=\exp(v)\cdot G_{t}(p). $$
It is clear that 
$$F_{t}(L''_{\zeta,c})=F_{\zeta,c(t)}(\tilde{G}_{t}(L''_{\zeta,c}))=F_{\zeta,c(t)}(L''_{\zeta,c(t)})\subset L'_{\zeta,c(t)}, $$
where remember that 
$$L'_{\zeta,c(t)}=\{\, \exp(v)\cdot p \mid v \in U,\, p\in M^{\sigma}, \langle \mu(p), \zeta_{j}\rangle=c_{j}(t),\, j=1,\dots,n \,\}. $$
Since torus action is free on $M''^{\sigma}_{\zeta,c(t)}$, one can easily prove that $F_{t}$ is embedding for all $t$ and 
$m$-dimensional Hausdorff measure of $L'_{\zeta,c(t)}\setminus F_{t}(L''_{\zeta,c})$ is zero. 

Hence the remainder we have to prove is to prove that $F:L''_{\zeta,c}\times I\rightarrow M$ is a solution of a 
generalized Lagrangian mean curvature flow. 
Since both $K_{t}$ and the normal part of $\partial F/\partial t$ are sections of normal bundle and 
$F_{t}:L''_{\zeta,c}\rightarrow M$ is a Lagrangian submanifold, 
it is enough to prove 
\begin{align}\label{eq1}
\omega(\frac{\partial F}{\partial t},F_{t*}Z)=\omega(K_{t},F_{t*}Z)
\end{align}
for all tangent vector $Z$ on $L''_{\zeta,c}$ to prove the equation (\ref{gmcf}). 
Fix a point $x=(p,v)$ in $L''_{\zeta,c}=M''^{\sigma}_{\zeta,c}\times U$. 
Since we have a decomposition 
$$T_{x}L''_{\zeta,c}=T_{p}M''^{\sigma}_{\zeta,c}\oplus T_{v}U$$ 
and note that $T_{v}U\cong V_{\zeta}$, 
a tangent vector $Z$ is written by $Z=X+Y$ for some tangent vectors $X$ in $T_{p}M''^{\sigma}_{\zeta,c}$ and $Y$ in $V_{\zeta}$. 
For $X$ and $Y$, we have 
$$F_{t*}X=\exp(v)_{*}(G_{t*}X)\hspace{5mm}and\hspace{5mm}F_{t*}Y=\exp(v)_{*}(Y_{G_{t}(p)}). $$
For $X$, we have
\begin{align*}
\omega(\frac{\partial F}{\partial t},F_{t*}X)=\omega(\exp(v)_{*}(\frac{\partial G}{\partial t}),\exp(v)_{*}(G_{t*}X))=\omega(\frac{\partial G}{\partial t},G_{t*}X)=0. 
\end{align*}
The second equality follows from the torus invariance of $\omega$, 
and the third equality follows from that both $\partial G/\partial t$ and $G_{t*}X$ are tangent to real form and it is a Lagrangian. 
If we use the equation (\ref{K}), we have 
\begin{align*}
\omega(K_{t},F_{t*}X)=\omega(J\nabla \theta_{F_{t}},F_{t*}X)=-g(\nabla \theta_{F_{t}},F_{t*}X)=-X\theta_{F_{t}}=0, 
\end{align*}
since $\theta_{F_{t}}(p,v)=2\pi\langle\gamma,v\rangle + \frac{\pi}{2}n$ by Theorem \ref{lagang} and it is independent of $M''^{\sigma}_{\zeta,c}$ part. 
Thus the equation (\ref{eq1}) holds for $X$. 
Next for Y, we have 
\begin{align*}
\omega(\frac{\partial F}{\partial t},F_{t*}Y)&=\omega(\frac{\partial G}{\partial t},Y_{G_{t}(p)})=\frac{\partial G}{\partial t}\langle\mu, Y\rangle=\frac{\partial }{\partial t}\langle\mu\circ G_{t}, Y\rangle\\
&=\frac{\partial }{\partial t}\langle\mu\circ G_{t}, a^{1}\zeta_{1}+\dots+a^{n}\zeta_{n}\rangle\\
&=\frac{\partial }{\partial t}(a^{1}c_{1}(t)+\dots+a^{n}c_{n}(t))\\
&=-2\pi\langle \gamma, Y\rangle. 
\end{align*}
The second equality follows from the assumption of the moment map $\mu$. 
In the fourth equality we put $Y=a^{1}\zeta_{1}+\dots+a^{n}\zeta_{n}$ for some coefficients $a^{i}$ and 
the fifth equality follows from the definition of $M''^{\sigma}_{\zeta,c(t)}$. 
In the last equality, remember that $c_{i}(t)$ is defined by  $c_{i}(t):=c_{i}-2\pi\langle\gamma,\zeta_{i}\rangle t$. 
If we use the equation (\ref{K}), we have 
\begin{align*}
\omega(K_{t},F_{t*}Y)=\omega(J\nabla \theta_{F_{t}},F_{t*}Y)=-g(\nabla \theta_{F_{t}},F_{t*}Y)=-Y\theta_{F_{t}}=-2\pi\langle\gamma,Y\rangle. 
\end{align*}
Thus the equation (\ref{eq1}) holds for $Y$ and it is proved that $F:L''_{\zeta,c}\times I\rightarrow M$ is a solution of a generalized Lagrangian mean curvature flow. 
\end{proof}
\section{Examples}
In this section we give some examples of our main theorems. 
First we explain that if the ambient space $M$ is $\mathbb{C}^m$ then our examples coincide with those constructed by Lee and Wang in \cite{LeeWang}. 
 \\
 
\noindent
{\bf Example 5.1.} 
Let $(\mathbb{C}^{m},\omega,g,J,\Omega)$ be a standard complex plane with a holomorphic volume form $\Omega=dw^{1}\wedge\dots\wedge dw^{m}$ by the standard coordinates $w$. 
If we write $w_{i}=e^{z_{i}}$ where $w_{i}\neq 0$ then $\Omega$ is written by $\Omega=e^{z^{1}+\dots+z^{m}}dz^{1}\wedge\dots\wedge dz^{m}$. 
Hence we can take $\gamma$ as $\gamma=(1,\dots,1)$. 
A moment map is given by $\mu(w)=\frac{1}{2}(|w^{1}|^2,\dots,|w^{m}|^2)$ and a moment polytope is given by 
$$\Delta=\{\,y\in\mathbb{R}^{m}\mid\langle y,\lambda_{i}\rangle\geq 0,\, i=1,\dots,m\,\}$$
where $\lambda_{i}:=e_{i}$ the $i$-th standard base and then we have $\langle \gamma,\lambda_{i}\rangle=1$ for all $i$. 
The real form of $\mathbb{C}^{m}$ is $\mathbb{R}^{m}$ and of course $\mathbb{R}^{m}$ can be constructed by gluing from $2^{m}$-copies of $\Delta$. 
Take $\zeta=(\zeta_{1},\dots,\zeta_{m})\in\mathbb{R}^{m}$ satisfying $\langle \gamma,\zeta\rangle>0$ and $c=0$. 
Since 
$$c(t)=c-2\pi\langle \gamma,\zeta\rangle t=-2\pi t\langle \gamma,\zeta\rangle=-2\pi t\sum_{j=1}^{m}\zeta_{j}$$
and $\Delta_{\zeta,c(t)}=\{\,  y \in \Delta \mid \langle y, \zeta \rangle =c(t)\,\}$, we have 
\begin{align*}
M^{\sigma}_{\zeta,c(t)}&=(\mu|_{\mathbb{R}^{m}})^{-1}(\Delta_{\zeta,c(t)})\\
&=\biggl\{\,  x \in \mathbb{R}^{m} \,\bigg|\, \sum_{j=1}^{m}\zeta_{j}x^{2}_{j}=-4\pi t\sum_{j=1}^{m}\zeta_{j}\,\biggr\}, 
\end{align*}
and $L'_{\zeta,c(t)}$ the image of $F_{\zeta,c(t)}:L_{\zeta, c}\rightarrow \mathbb{C}^{m}$ is given by 
\begin{align*}
L'_{\zeta,c(t)}=\biggl\{\, (x_{1}e^{2\pi i\zeta_{1}s}&,\dots,x_{m}e^{2\pi i\zeta_{m}s})  \in \mathbb{R}^{m} \,\bigg|\, 0\leq s \leq 1, \\
 &\sum_{j=1}^{m}\zeta_{j}x^{2}_{j}=-4\pi t\sum_{j=1}^{m}\zeta_{j},\, x=(x_{1},\dots,x_{m})\in\mathbb{R}^{m}\,\biggr\}. 
\end{align*}
This $L'_{\zeta,c(t)}$ coincides with $V_{t}$ in Theorem 1.1 in \cite{LeeWang} and Lee and Wang proved that 
$V_{t}$ is Hamiltonian stationary and $\{V_{t}\}_{t\in\mathbb{R}}$ form an eternal solution for Brakke flow. 
Hence our theorems can be considered as some kind of generalization of example of Lee and Wang \cite{LeeWang} to toric almost Calabi--Yau manifolds. 
 \\

\noindent
{\bf Example 5.2.} 
Let $M=K_{\mathbb{P}^2}$ be a canonical line bundle of $\mathbb{P}^2$. 
Then a moment polytope is given by 
$\Delta=\{\,y\in\mathbb{R}^{3}\mid\langle y,\lambda_{i}\rangle\geq \kappa_{i},\, i=1,\dots,4\,\}$ where 
$$\lambda_{1}=(0,0,1),\, \lambda_{2}=(1,0,1),\, \lambda_{3}=(0,1,1),\, \lambda_{4}=(-1,-1,1)$$ 
and $\kappa_{1}=\kappa_{2}=\kappa_{3}=0$, $\kappa_{4}=-1$. 
Of course $M$ is a toric almost Calabi--Yau manifold since we can take $\gamma=(0,0,1)$ so that 
$\langle \gamma,\lambda_{i}\rangle=1$ for all $i$. 
For example, take 
$$\zeta=(3,1,5)\hspace{5mm}\mathrm{and}\hspace{5mm}c=5. $$
Then $\Delta_{\zeta,c(t)}$ is written by 
$$\Delta_{\zeta,c(t)}=\{\,y\in\Delta\mid\langle y,\zeta\rangle=5-10\pi t\,\}$$ 
since $c(t)=c-2\pi\langle\gamma,\zeta\rangle t$ and $t\geq 0$. 
We write each facet of $\Delta$ by $F_{i}:=\{\,y\in\Delta\mid \langle y,\lambda_{i}\rangle=\kappa_{i}\,\}$ for $i=1,2,3,4$. 

By simple calculation, one can easily see that 
when $0\leq t < \frac{1}{5\pi}$ then $\Delta_{\zeta,c(t)}$ intersects with $F_{2}$, $F_{3}$ and $F_{4}$ so $\Delta_{\zeta,c(t)}$ is a triangle, 
when $t=\frac{1}{5\pi}$ then $\Delta_{\zeta,c(t)}$ across $(1,0,0)$ a vertex of $\Delta$ and a topological change happens, 
when $\frac{1}{5\pi}< t < \frac{2}{5\pi}$ then $\Delta_{\zeta,c(t)}$ intersects with $F_{1}$, $F_{2}$, $F_{3}$ and $F_{4}$ so $\Delta_{\zeta,c(t)}$ is a square, 
when $t=\frac{2}{5\pi}$ then $\Delta_{\zeta,c(t)}$ across $(0,1,0)$ a vertex of $\Delta$ and a topological change happens, 
when $\frac{2}{5\pi}< t < \frac{1}{2\pi}$ then $\Delta_{\zeta,c(t)}$ intersects with $F_{1}$, $F_{2}$ and $F_{3}$ so $\Delta_{\zeta,c(t)}$ is a triangle, 
and when $t=\frac{1}{2\pi}$ then $\Delta_{\zeta,c(t)}$ is one point $\{(0,0,0)\}$ this means that $\Delta_{\zeta,c(t)}$ vanishes. 
Hence a solution $\{L'_{\zeta,c(t)}\}_{t\in I}$ of a generalized Lagrangian mean curvature flow with singularities and topological changes 
exists for $t\in I=[0,\frac{1}{2\pi})$. It forms singularities and topological changes when $t=\frac{1}{5\pi}$ and $t=\frac{2}{5\pi}$, and vanishes when $t=\frac{1}{2\pi}$. 

One can see the topology of $L_{\zeta,c(t)}=M^{\sigma}_{\zeta,c(t)}\times S^1$ (since now $T_{\zeta}\cong S^1$) by the same argument as explained in the proof of Proposition A.3 in \cite{Yamamoto}. 
In fact the topology of $M^{\sigma}_{\zeta,c(t)}$ is $S^2$ when $0\leq t < \frac{1}{5\pi}$, is $T^2$ when $ \frac{1}{5\pi}< t < \frac{2}{5\pi}$, is $S^2$ when $\frac{2}{5\pi}< t < \frac{1}{2\pi}$. 
\appendix
\section{}\label{app}
In Section \ref{angle}, we introduce the notion of the weighted Hamiltonian stationary. 
In this appendix, we explain the meaning of it. 
Let $(M, \omega, g, J, \Omega)$ be a $2m$-dimensional almost Calabi--Yau manifold with the function $\psi$ defined by (\ref{psi}) and 
$F:L\rightarrow M$ be a Lagrangian immersion with the Lagrangian angle $\theta_{F}$. 
Then we define $F:L\rightarrow M$ is a weighted Hamiltonian stationary if $\Delta_{f}\theta_{F}=0$. 
Here $f$ is a function on $L$ defined by $f:=-mF^{*}\psi$ and $\Delta_{f}$ is the weighted Laplacian on Riemannian manifold $(L,F^{*}g)$ defined by 
$\Delta_{f}u:=\Delta u +\langle\nabla u,\nabla f \rangle$ where $\Delta$ is the standard Laplacian on $L$ with respect to a metric $F^{*}g$. 

Let $\tilde{g}:=e^{2\psi}g$ be a conformal rescaling of $g$ on $M$, then we get a new Riemannian manifold $(M,\tilde{g})$. 
For an immersion $F:L\rightarrow M$, we define a weighted volume functional $\mathrm{Vol}_{\psi}$ by 
$$\mathrm{Vol}_{\psi}(F):=\int_{L}dV_{F^{*}\tilde{g}}$$
where $dV_{F^{*}\tilde{g}}$ is the volume form on $L$ with respect to a metric $F^{*}\tilde{g}$. 
Note that the relation between $dV_{F^{*}\tilde{g}}$ and $dV_{F^{*}g}$ is given by 
$$dV_{F^{*}\tilde{g}}=e^{mF^{*}\psi}dV_{F^{*}g}=e^{-f}dV_{F^{*}g}.$$ 
Then we consider a symplectic manifold $(M,\omega)$ with the weighted volume functional $\mathrm{Vol}_{\psi}$. 
The following proposition is the meaning of the weighted Hamiltonian stationary. 
\begin{proposition}
A Lagrangian immersion $F:L\rightarrow M$ is weighted Hamiltonian stationary if and only if 
$F$ is a critical point of the weighted volume functional $\mathrm{Vol}_{\psi}$ along Hamiltonian deformations with respect to $\omega$. 
\end{proposition}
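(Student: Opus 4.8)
The plan is to compute the first variation of the weighted volume functional $\mathrm{Vol}_{\psi}$ under a Hamiltonian deformation and identify the resulting Euler--Lagrange operator with $\Delta_f \theta_F$. Let $F_s:L\to M$ be a one-parameter family of Lagrangian immersions with $F_0=F$, generated by a Hamiltonian vector field: the variation field is $X=\left.\frac{\partial F_s}{\partial s}\right|_{s=0}$ with $\omega(X,\cdot)=dh$ along $F$ for a compactly supported function $h\in C^\infty_c(L)$ (equivalently $X^\perp = J\nabla h$, modulo tangential terms which do not affect the volume). I would first recall the classical first variation formula for the (unweighted) volume in the direction of a normal field $V$, namely $\frac{d}{ds}\big|_{s=0}\int_L dV_{F_s^*g} = -\int_L g(H,V)\,dV_{F^*g}$, and then account for the conformal weight $e^{-f}=e^{mF^*\psi}$ in $dV_{F^*\tilde g}=e^{-f}\,dV_{F^*g}$.

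Concretely, I would write $\mathrm{Vol}_\psi(F_s)=\int_L e^{m F_s^*\psi}\,dV_{F_s^*g}$ and differentiate the two factors by the product rule. Differentiating $e^{mF_s^*\psi}$ brings down $m\,d\psi(X)=m\,g(\nabla\psi,X)$, where only the normal part $m\,g(\nabla\psi^\perp,X)$ survives after combining with the tangential reparametrization freedom; differentiating $dV_{F_s^*g}$ gives the $-g(H,X)$ term. Using $X^\perp = J\nabla h$ and the fact that for a Lagrangian immersion $J$ interchanges the tangent and normal bundles, one gets $g(H,X^\perp)=g(H,J\nabla h)=-g(JH,\nabla h)$ and similarly $g(\nabla\psi^\perp,X^\perp)=-g(J\nabla\psi^\perp,\nabla h)$. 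Collecting terms, the integrand becomes $g\big(J(H-m\nabla\psi^\perp),\nabla h\big)\,e^{-f}=g(JK,\nabla h)\,e^{-f}$ with $K$ the generalized mean curvature vector of \eqref{defK}; then invoking $K=J\nabla\theta_F$ from \eqref{K} and $J^2=-1$ gives $g(JK,\nabla h)=-g(\nabla\theta_F,\nabla h)$, so
\begin{align*}
\frac{d}{ds}\bigg|_{s=0}\mathrm{Vol}_\psi(F_s) = -\int_L g(\nabla\theta_F,\nabla h)\,e^{-f}\,dV_{F^*g}.
\end{align*}
Finally I would integrate by parts with respect to the weighted measure $e^{-f}dV_{F^*g}$: the operator formally adjoint to $u\mapsto g(\nabla u,\cdot)$ in $L^2(e^{-f}dV_{F^*g})$ is $-\Delta_f$, since $e^{f}\mathrm{div}(e^{-f}\nabla u)=\Delta u - g(\nabla f,\nabla u)$ — here I must be careful about the sign convention, matching it to the definition $\Delta_f u=\Delta u+\langle\nabla u,\nabla f\rangle$ given in the paper, i.e. replacing $f$ by $-f$ in the standard Bakry--Émery convention (indeed $f=-mF^*\psi$ so $e^{-f}=e^{mF^*\psi}$ and the weighted measure is $e^{mF^*\psi}dV_{F^*g}$, consistent with the remark that $\Delta_f$ is the plain Laplacian of $F^*(e^{2\psi}g)$ up to the conformal volume discrepancy). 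This yields $\frac{d}{ds}\big|_{s=0}\mathrm{Vol}_\psi(F_s)=\int_L (\Delta_f\theta_F)\,h\,e^{-f}\,dV_{F^*g}$, and since $h$ ranges over all of $C^\infty_c(L)$, vanishing of the first variation for all Hamiltonian deformations is equivalent to $\Delta_f\theta_F=0$.

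**Main obstacle.** The technically delicate point is the bookkeeping of tangential versus normal components together with the sign conventions. The Lagrangian angle $\theta_F$ is only $\mathbb{R}/\pi\mathbb{Z}$-valued in general, so to make sense of $\nabla\theta_F$ and of the integration by parts one should either assume $L$ oriented (lifting to $\mathbb{R}/2\pi\mathbb{Z}$) or work with the globally well-defined $1$-form $d\theta_F$; this is harmless since $h$ is a genuine function and only $d\theta_F$ enters. I expect the real care to go into verifying that the tangential part of the variation field $X$ contributes nothing — this uses that a tangential variation is an infinitesimal reparametrization, so it leaves $\mathrm{Vol}_\psi$ unchanged — and into pinning down the precise sign in $K=J\nabla\theta_F$ versus $K=-J\nabla\theta_F$ so that the Euler--Lagrange operator comes out as $+\Delta_f\theta_F$ rather than its negative; the conclusion is insensitive to this, but a clean write-up must fix it once and for all, e.g. by normalizing against the Calabi--Yau case where $\psi\equiv 0$ and the statement reduces to the known characterization of Hamiltonian stationary Lagrangians via $\Delta\theta_F=0$.
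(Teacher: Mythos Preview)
Your proposal is correct and follows essentially the same route as the paper: compute the first variation of $\mathrm{Vol}_\psi$ along a Hamiltonian deformation, recognize the integrand as $-g(K,X)e^{-f}$ with $K=H-m\nabla\psi^\perp$, invoke $K=J\nabla\theta_F$ and $X^\perp=J\nabla h$ to rewrite this as $-g(\nabla\theta_F,\nabla h)e^{-f}$, and then integrate by parts against the weighted measure to obtain $(\Delta_f\theta_F)h\,e^{-f}dV_{F^*g}$. The paper's write-up is slightly terser (it applies the first variation formula in one line without separating tangential and normal parts, and it fixes the sign convention $\omega(\partial F/\partial t,\cdot)=-dh_t$ from the outset), but the argument is the same.
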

\begin{proof}
Let $\{F_{t}:L\rightarrow M\}_{t}$ be a Hamiltonian deformation of $F$ with Hamiltonian functions $\{h_{t}:L\rightarrow\mathbb{R}\}_{t}$, that is, $F_{0}=F$ and 
\begin{align}\label{ham}
\omega(\frac{\partial F}{\partial t},\,\bullet\,)=-dh_{t}. 
\end{align}
If $L$ is non-compact we assume that each $h_{t}$ has a compact support. 
Then the first variation of $\mathrm{Vol}_{\psi}$ at $F$ along $\{F_{t}:L\rightarrow M\}_{t}$ is derived by the first variation formula as
\begin{align*}
\frac{d}{dt}\bigg|_{t=0}\mathrm{Vol}_{\psi}(F_{t})&=\frac{d}{dt}\bigg|_{t=0}\int_{L}e^{mF_{t}^{*}\psi}dV_{F_{t}^{*}g}\\
&=-\int_{L}g(e^{mF^{*}\psi}H-me^{mF^{*}\psi}\nabla \psi^{\bot},\frac{\partial F}{\partial t}\bigg|_{t=0})dV_{F^{*}g}\\
&=-\int_{L}g(H-m\nabla \psi^{\bot},\frac{\partial F}{\partial t}\bigg|_{t=0})e^{-f}dV_{F^{*}g}.
\end{align*}
Next we remember the definition of the generalized mean curvature vector filed $K$, see (\ref{defK}), and use the equation (\ref{K}), then we have 
\begin{align*}
-\int_{L}g(H-m\nabla \psi^{\bot},\frac{\partial F}{\partial t}\bigg|_{t=0})e^{-f}dV_{F^{*}g}&=-\int_{L}g(K,\frac{\partial F}{\partial t}\bigg|_{t=0})e^{-f}dV_{F^{*}g}\\
&=-\int_{L}g(J\nabla\theta_{F},\frac{\partial F}{\partial t}\bigg|_{t=0})e^{-f}dV_{F^{*}g}. 
\end{align*}
Since the equation (\ref{ham}) is equivalent to $\frac{\partial F}{\partial t}=J\nabla h_{t}$, we have
\begin{align*}
-\int_{L}g(J\nabla\theta_{F},\frac{\partial F}{\partial t}\bigg|_{t=0})e^{-f}dV_{F^{*}g}&=-\int_{L}g(J\nabla\theta_{F},J\nabla h_{0})e^{-f}dV_{F^{*}g}\\
&=-\int_{L}\langle d\theta_{F},dh_{0}\rangle_{F^{*}g}e^{-f}dV_{F^{*}g}\\
&=-\int_{L}(\Delta_{f}\theta_{F}) h_{0} e^{-f}dV_{F^{*}g}\\
&=-\int_{L}(\Delta_{f}\theta_{F}) h_{0} dV_{F^{*}\tilde{g}}. 
\end{align*}
In the third equality, we use the another definition of $\Delta_{f}u=\delta_{f}(du)$ where $\delta_{f}$ is the formal adjoint of $d$ with respect to a 
weighted measure $e^{-f}dV_{F^{*}g}$. One can easily show that $\delta_{f}(du)=\Delta u+\langle\nabla u,\nabla f\rangle_{F^{*}g}$. 
Now we can take any $h_{0}$ thus it is clear that the first variation of $\mathrm{Vol}_{\psi}$ at $F$ along all Hamiltonian deformations is zero 
if and only if $\Delta_{f}\theta_{F}=0$. 
\end{proof}

\end{document}